\theoremstyle{plain} 
\newtheorem{theorem}{\indent\sc Theorem}[section]
\newtheorem{corollary}[theorem]{\indent\sc Corollary}
\newtheorem{proposition}[theorem]{\indent\sc Proposition}
\theoremstyle{definition} 
\newtheorem{definition}[theorem]{\indent\sc Definition}
\newtheorem{remark}[theorem]{\indent\sc Remark}
\newtheorem{example}[theorem]{\indent\sc Example}
\title{Almost $\eta$-Ricci solitons in $(LCS)_n$-manifolds}
\author{Adara M. Blaga}
\date{}
\begin{document}

\maketitle

\footnote{ 
2010 \textit{Mathematics Subject Classification}.
53B30, 53C15, 53C21, 53C25, 53C44.
}
\footnote{ 
\textit{Key words and phrases}.
almost $\eta$-Ricci solitons, $(LCS)_n$-structure.
}

\begin{abstract}
We consider almost $\eta$-Ricci solitons in $(LCS)_n$-manifolds satisfying certain curvature conditions. We provide a lower and an upper bound for the norm of the Ricci curvature in the gradient case, derive a Bochner-type formula for an almost $\eta$-Ricci soliton and state some consequences of it on an $(LCS)_n$-manifold.
\end{abstract}

\section{Introduction}

Ricci solitons and $\eta$-Ricci solitons (which include quasi-Einstein metrics) are natural generalizations of Einstein metrics. In 1982, R. S. Hamilton introduced on a Riemannian manifold $(M,g_0)$ an evolution equation for metrics, called the \textit{Ricci flow} \cite{hamil}:
$$\frac{\partial}{\partial t}g(t)=-2S(g(t)), \ \ g(0)=g_0,$$
(for $S$ the Ricci curvature tensor), which is used to deform a metric by smoothing out its singularities. Ricci solitons may be regarded as generalized fixed points of the Ricci flow (i.e. fixed points in the quotient space of Riemannian metrics modulo diffeomorphisms and rescalings), modeling the formation of singularities. Precisely, a \textit{Ricci soliton} on a Riemannian manifold $(M,g)$ is defined \cite{hamil} as a triple $(g,\xi,\lambda)$, for $\xi$ a vector field on $M$ and $\lambda$ a real constant, satisfying the equation:
$$\frac{1}{2}\mathcal{L}_{\xi}g+S+\lambda g=0,$$
where $\mathcal{L}_{\xi}$ is the Lie derivative operator along the vector field $\xi$. Obviously, if $\xi$ is a Killing vector field, then the Ricci soliton reduces to an Einstein metric $(g,\lambda)$. A further generalization is the notion of \textit{$\eta$-Ricci soliton} defined by J. T. Cho and M. Kimura \cite{ch} as a quadruple $(g,\xi,\lambda,\mu)$, for $\xi$ a vector field on the Riemannian manifold $(M,g)$ and $\lambda$ and $\mu$ real constants, satisfying the equation:
$$\frac{1}{2}\mathcal{L}_{\xi}g+S+\lambda g+\mu \eta\otimes \eta=0,$$
where $\eta$ is the $g$-dual $1$-form of $\xi$. Moreover, if $\lambda$ and $\mu$ are let to be smooth functions, we talk about \textit{almost Ricci solitons} \cite{pi} and \textit{almost $\eta$-Ricci solitons}, respectively, which we shall consider in our paper.

In the last years, the interest in studying Ricci solitons and their generalizations in different geometrical contexts has considerably increased, due to their connection to general relativity. They control the Ricci and the scalar curvature of the manifold, thus their study towards a special view to different curvature conditions is appropriate.

In the present paper we consider almost $\eta$-Ricci solitons on $(LCS)_n$-manifolds which satisfy certain curvature properties, in particular, $(\xi,\cdot)_{R}\cdot S=0$ and $(\xi,\cdot)_{S}\cdot R=0$, respectively. The \textit{$(LCS)_n$-manifolds} were introduced by A. A. Shaikh \cite{ali} and they are important in the general theory of relativity and cosmology. Different aspects of $\eta$-Ricci solitons in $(LCS)_n$-manifolds have lately been studied by many authors (see \cite{as}, \cite{bai}, \cite{hu}, \cite{huic}). Also, remark that results on Ricci solitons satisfying similar types of curvature conditions have been obtained: in \cite{na} by H. G. Nagaraja and C. R. Premalatha dealed with the cases $(\xi,\cdot)_{R}\cdot \tilde{C}=0$, $(\xi,\cdot)_{P}\cdot \tilde{C}=0$, $(\xi,\cdot)_{H}\cdot S=0$, $(\xi,\cdot)_{\tilde{C}}\cdot S=0$ and in \cite{ba}, C. S. Bagewadi, G. Ingalahalli and S. R. Ashoka treated the cases: $(\xi,\cdot)_{R}\cdot B=0$, $(\xi,\cdot)_{B}\cdot S=0$, $(\xi,\cdot)_{S}\cdot R=0$, $(\xi,\cdot)_{R}\cdot \bar{P}=0$ and $(\xi,\cdot)_{\bar{P}}\cdot S=0$.

\textit{Gradient solitons}, a particular case of solitons having the potential vector field $\xi$ of gradient type, are of special interest, the gradient vector fields playing a central r\^{o}le in different mathematical-physics theories, for example, in Morse-Smale theory \cite{ms}. Our interest is also to characterize the geometry of an almost $\eta$-Ricci soliton in the case when the potential vector field is of gradient type, underlying these results for $(LCS)_n$-manifolds. We provide a lower and an upper bound for the Ricci curvature tensor's norm and derive a Bochner-type formula in the gradient case.

\section{$(LCS)_n$-manifolds}

Let $(M,g)$ be an $n$-dimensional Lorentzian manifold and $\xi$ a unit timelike concircular vector field, that is a vector field satisfying $g(\xi,\xi)=-1$ and $\nabla \xi=\alpha (I+\eta\otimes \xi)$, with $\alpha$ a nowhere zero smooth function on $M$ verifying $d\alpha=\rho \eta$, for $\rho\in C^{\infty}(M)$, where $\nabla$ is the Levi-Civita connection of $g$ and $\eta:=i_{\xi}g$. Denote by $\varphi$ the $(1,1)$-tensor field $\varphi:=\frac{1}{\alpha}\nabla\xi$.

In \cite{ma}, K. Matsumoto introduced the notion of \textit{Lorentzian para-Sasakian structure (briefly, LP-Sasakian structure)}. A more general notion is that of \textit{Lorentzian concircular structure (briefly, $(LCS)_n$-structure)} introduced by A. A. Shaikh \cite{sh} as being the data $(g,\xi,\eta,\varphi,\alpha)$, where $g$ is a Lorentzian metric on $M$, $\xi$ is a unit timelike concircular vector field, $\eta:=i_{\xi}g$ is the $g$-dual of $\xi$ and $\varphi:=I+\eta\otimes \xi$ is the associated $(1,1)$-tensor field.

From the definition it follows that:
\begin{enumerate}
  \item $\varphi \xi =0$, \ $\eta \circ \varphi =0$,
  \item $\eta (\xi )=-1$, \ ${\varphi }^2=I+\eta \otimes \xi $,
  \item $g(\varphi X, Y)=g(X,\varphi Y)$, for any $X$, $Y\in \mathfrak{X}(M)$ and $g(\varphi \cdot, \varphi \cdot)=g+\eta \otimes \eta $,
  \item $(\nabla_X\varphi)Y=\alpha[g(X,Y)\xi+2\eta(X)\eta(Y)\xi+\eta(Y)X]$, for any $X$, $Y\in \mathfrak{X}(M)$.
\end{enumerate}

\medskip

Properties of this structure which follow from straightforward computations are given in the next proposition.

\begin{proposition}\label{p1}
On an $(LCS)_n$-manifold $(M,g,\xi,\eta,\varphi,\alpha)$, for any $X$, $Y$, $Z\in \mathfrak{X}(M)$, the following relations hold:
\begin{equation}\eta(\nabla_X\xi)=0, \ \ \nabla_{\xi}\xi=0,\end{equation}
\begin{equation}R(X,Y)\xi=(\alpha^2-\rho)[\eta(Y)X-\eta(X)Y],\end{equation}
\begin{equation}\eta(R(X,Y)Z)=(\alpha^2-\rho)[\eta(X)g(Y,Z)-\eta(Y)g(X,Z)], \ \ \eta(R(X,Y)\xi)=0,\end{equation}
\begin{equation}\nabla \eta=\alpha(g+\eta\otimes \eta), \ \ \nabla_{\xi} \eta=0,\end{equation}
\begin{equation}\mathcal{L}_{\xi}\varphi=0, \ \ \mathcal{L}_{\xi}\eta=0, \ \ \mathcal{L}_{\xi}g=2 \nabla \eta,\end{equation}
where $R$ is the Riemann curvature tensor field, $\nabla$ is the Levi-Civita connection associated to $g$ and $\mathcal{L}_{\xi}$ denotes the Lie derivative operator along the vector field $\xi$.
\end{proposition}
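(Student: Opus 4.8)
The plan is to extract all five relations from the single defining identity $\nabla_X\xi = \alpha(X + \eta(X)\xi) = \alpha\varphi X$, together with $d\alpha = \rho\eta$ and the structure property~4 for $\nabla\varphi$. I would first record $\nabla_X\xi = \alpha\varphi X$ (since $\varphi = I + \eta\otimes\xi$); this is the workhorse for everything that follows.

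For the first-order identities (1), (4), (5) I would proceed directly. Since $g$ is parallel, $(\nabla_X\eta)(Y) = X(g(\xi,Y)) - g(\xi,\nabla_X Y) = g(\nabla_X\xi, Y) = \alpha g(\varphi X, Y) = \alpha[g(X,Y) + \eta(X)\eta(Y)]$, which is exactly (4); setting $X = \xi$ and using $\varphi\xi = 0$ gives $\nabla_\xi\eta = 0$. Pairing $\nabla_X\xi$ with $\xi$ yields $\eta(\nabla_X\xi) = \alpha g(\varphi X, \xi) = 0$, and $\nabla_\xi\xi = \alpha\varphi\xi = 0$ by property~1, giving (1). For (5), the identity $\mathcal{L}_\xi g = 2\nabla\eta$ follows from $(\mathcal{L}_\xi g)(X,Y) = g(\nabla_X\xi, Y) + g(X,\nabla_Y\xi)$ together with the self-adjointness $g(\varphi X, Y) = g(X, \varphi Y)$ of property~3; then $\mathcal{L}_\xi\eta = 0$ follows by writing $[\xi, X] = \nabla_\xi X - \nabla_X\xi$ and invoking $\nabla_\xi\eta = 0$ and $\eta(\nabla_X\xi) = 0$; finally $\mathcal{L}_\xi\varphi = 0$ follows once I check $\nabla_\xi\varphi = 0$ from property~4 (the three terms collapse because $\eta(\xi) = -1$) and that $\nabla_{\varphi X}\xi = \alpha\varphi^2 X$ cancels $\varphi\nabla_X\xi = \alpha\varphi^2 X$.

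The heart of the argument is the curvature identity (2). I would expand $R(X,Y)\xi = \nabla_X\nabla_Y\xi - \nabla_Y\nabla_X\xi - \nabla_{[X,Y]}\xi$ after substituting $\nabla_Y\xi = \alpha\varphi Y$. The key bookkeeping step is $\nabla_X(\alpha\varphi Y) = X(\alpha)\varphi Y + \alpha(\nabla_X\varphi)Y + \alpha\varphi\nabla_X Y$, where $X(\alpha) = d\alpha(X) = \rho\eta(X)$. After antisymmetrizing in $X,Y$, the terms $\alpha\varphi\nabla_X Y$ and $\alpha\varphi\nabla_Y X$ combine with $-\nabla_{[X,Y]}\xi = -\alpha\varphi(\nabla_X Y - \nabla_Y X)$ and cancel, leaving $R(X,Y)\xi = \rho[\eta(X)\varphi Y - \eta(Y)\varphi X] + \alpha[(\nabla_X\varphi)Y - (\nabla_Y\varphi)X]$. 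Plugging in property~4, the symmetric terms drop under antisymmetrization, contributing $\alpha^2[\eta(Y)X - \eta(X)Y]$, while $\eta(X)\varphi Y - \eta(Y)\varphi X = \eta(X)Y - \eta(Y)X$ (the $\xi$-components cancel), contributing $-\rho[\eta(Y)X - \eta(X)Y]$. Summing gives $(\alpha^2 - \rho)[\eta(Y)X - \eta(X)Y]$.

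Finally, (3) would follow from (2) by the skew-symmetry $g(R(X,Y)Z, W) = -g(R(X,Y)W, Z)$: pairing (2) with $Z$ and using $\eta = g(\xi, \cdot)$ gives $\eta(R(X,Y)Z) = -g(R(X,Y)\xi, Z) = (\alpha^2-\rho)[\eta(X)g(Y,Z) - \eta(Y)g(X,Z)]$, and setting $Z = \xi$ (so $g(Y,\xi) = \eta(Y)$) collapses this to $\eta(R(X,Y)\xi) = 0$. I expect the main obstacle to be the sign and term-tracking in the curvature computation for (2), in particular ensuring the Levi-Civita connection terms cancel correctly under antisymmetrization and correctly substituting $d\alpha = \rho\eta$; the remaining identities are routine tensorial manipulations from the defining formula for $\nabla\xi$.
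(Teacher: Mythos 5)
Your proposal is correct: all five relations do follow exactly as you compute them from the workhorse identity $\nabla_X\xi=\alpha\varphi X$, $d\alpha=\rho\eta$, and the formula for $\nabla\varphi$, and the sign bookkeeping in the curvature step (the cancellation of the $\alpha\varphi[X,Y]$ terms, the contribution $\alpha^2[\eta(Y)X-\eta(X)Y]$ from $\nabla\varphi$, and the $-\rho[\eta(Y)X-\eta(X)Y]$ from $d\alpha=\rho\eta$) is accurate. The paper itself offers no proof --- it only asserts that these properties ``follow from straightforward computations'' --- so your derivation is precisely the intended argument, written out in full.
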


\begin{remark}
On an $(LCS)_n$-manifold $(M,g,\xi,\eta,\varphi,\alpha)$ we deduce that:

(i) the $1$-form $\eta$ is closed;

(ii) the Nijenhuis tensor field of $\varphi$ vanishes identically, therefore, the structure is normal;

(iii) if $\alpha$ is a constant function and $(M,g)$ is of constant curvature $k$, then $M$ is elliptic manifold and $k=\alpha^2$.
\end{remark}

\bigskip

\begin{example}\cite{hui}\label{e2}
Let $M=\{(x,y,z)\in \mathbb{R}^3, z\neq 0\}$, where $(x,y,z)$ are the standard coordinates in $\mathbb{R}^3$. Consider the linearly independent system of vector fields
$$E_1:=z^2\frac{\partial}{\partial x}, \ \ E_2:=z^2\frac{\partial}{\partial y}, \ \ E_3:=\frac{\partial}{\partial z}.$$

Define the Lorentzian metric $g$ by:
$$g(E_1,E_1)=g(E_2,E_2)=-g(E_3,E_3)=1,$$
$$g(E_1,E_2)=g(E_2,E_3)=g(E_3,E_1)=0$$
the vector field $\xi$ and the $1$-form $\eta$ by:
$$\xi:=E_3, \ \ \eta(X):=g(X,E_3),$$
for any $X\in \mathfrak{X}(M)$, and the $(1,1)$-tensor field $\varphi$ by:
$$\varphi E_1=E_1, \ \ \varphi E_2=E_2, \ \ \varphi E_3=0.$$

Using Koszul's formula for the Lorentzian metric $g$ we obtain:
$$\nabla_{E_1}E_1=-\frac{2}{z}E_3, \ \ \nabla_{E_1}E_2=0, \ \ \nabla_{E_1}E_3=-\frac{2}{z}E_1, \ \ \nabla_{E_2}E_1=0, \ \ \nabla_{E_2}E_2=-\frac{2}{z}E_3,$$$$\nabla_{E_2}E_3=-\frac{2}{z}E_2, \ \ \nabla_{E_3}E_1=0, \ \ \nabla_{E_3}E_2=0, \ \ \nabla_{E_3}E_3=0.$$

In this case, $(g,\xi,\eta,\varphi,\alpha)$ is an $(LCS)_3$-structure on $M$, where $\alpha=-\frac{2}{z}$.
\end{example}

\section{Almost $\eta$-Ricci solitons in $(M,g,\xi,\eta,\varphi,\alpha)$}

A more general notion than almost Ricci soliton \cite{pi} and $\eta$-Ricci soliton \cite{ch}, including also the generalized
quasi-Einstein manifolds \cite{bo}, will be further considered.

Let $(M,g)$ be an $n$-dimensional pseudo-Riemannian manifold ($n>2$), $\xi$ a vector field and $\eta$ a $1$-form on $M$.
\begin{definition}
\textit{An almost $\eta$-Ricci soliton} on $M$ is a data $(g,\xi,\lambda,\mu)$ which satisfy the equation:
\begin{equation}\label{e8}
\mathcal{L}_{\xi}g+2S+2\lambda g+2\mu\eta\otimes \eta=0,
\end{equation}
where $\mathcal{L}_{\xi}$ is the Lie derivative operator along the vector field $\xi$, $S$ is the Ricci curvature tensor field of the metric $g$, and $\lambda$ and $\mu$ are smooth functions on $M$.
\end{definition}

An almost $\eta$-Ricci soliton $(g,\xi,\lambda,\mu)$ is said to be \textit{steady} if $\lambda=0$, \textit{shrinking} if $\lambda<0$ or \textit{expanding} if $\lambda>0$.

In the same way we define \textit{the almost $\eta$-Einstein soliton} as a data $(g,\xi,\lambda,\mu)$ which satisfy the equation:
\begin{equation}
\mathcal{L}_{\xi}g+2S+(2\lambda-scal) g+2\mu\eta\otimes \eta=0,
\end{equation}
where $scal$ is the scalar curvature of $(M,g)$.

\bigskip

Replacing $\mathcal{L}_{\xi}g$ in terms of the Levi-Civita connection $\nabla$ in (\ref{e8}), we obtain:
\begin{equation}\label{e9}
2S(X,Y)=
-g(\nabla_X\xi,Y)-g(X,\nabla_Y\xi)-2\lambda g(X,Y)-2\mu\eta(X)\eta(Y),
\end{equation}
for any $X$, $Y\in \mathfrak{X}(M)$.

\bigskip

If $(M,g,\xi,\eta,\varphi,\alpha)$ is an $(LCS)_n$-manifold, then (\ref{e9}) becomes:
\begin{equation}\label{ea9}
S(X,Y)=
-(\alpha+\lambda) g(X,Y)-(\alpha+\mu)\eta(X)\eta(Y),
\end{equation}
for any $X$, $Y\in \mathfrak{X}(M)$, therefore $M$ is quasi-Einstein manifold.

\bigskip

Also remark that on an $(LCS)_n$-manifold $(M,g,\xi,\eta,\varphi,\alpha)$, the Ricci curvature tensor field satisfies:
\begin{equation}\label{ad1}
S(X,\xi)=(n-1)(\alpha^2-\rho)\eta(X),
\end{equation}
\begin{equation}
S(\varphi X,\varphi Y)=S(X,Y)+(n-1)(\alpha^2-\rho)\eta(X)\eta(Y),
\end{equation}
for any $X$, $Y\in \mathfrak{X}(M)$. From (\ref{ea9}) and (\ref{ad1}) we obtain:
\begin{equation}\label{ad}
\mu-\lambda=(n-1)(\alpha^2-\rho)
\end{equation}
and we can state:

\begin{proposition}
The scalar curvature of an $(LCS)_n$-manifold $(M,g,\xi,\eta,\varphi,\alpha)$ admitting an almost $\eta$-Ricci soliton $(g,\xi,\lambda,\mu)$ is:
\begin{equation}
scal=(1-n)[\alpha-n(\alpha^2+\xi(\alpha))+\mu].
\end{equation}
\end{proposition}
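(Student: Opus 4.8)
The plan is to obtain the scalar curvature simply by tracing the quasi-Einstein identity \eqref{ea9} with respect to $g$, and then to rewrite the result purely in terms of $\alpha$, $\mu$ and $n$ using the relations already established in this section. So the first step is to fix a (pseudo-orthonormal) frame $\{e_i\}$ adapted to the Lorentzian metric and contract
\[
S(X,Y)=-(\alpha+\lambda)\,g(X,Y)-(\alpha+\mu)\,\eta(X)\eta(Y).
\]
Here the only nonroutine point is to keep track of the Lorentzian signature: one has $\mathrm{tr}_g\,g=n$ as usual, but $\mathrm{tr}_g(\eta\otimes\eta)=\eta(\xi)=g(\xi,\xi)=-1$, since $\eta=i_\xi g$ and $\xi$ is a unit \emph{timelike} vector field. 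Tracing therefore yields
\[
scal=-n(\alpha+\lambda)+(\alpha+\mu)=(1-n)\alpha-n\lambda+\mu,
\]
which is the intermediate expression I would isolate first.

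The second step is to eliminate $\lambda$. For this I would invoke relation \eqref{ad}, namely $\mu-\lambda=(n-1)(\alpha^2-\rho)$, to write $\lambda=\mu-(n-1)(\alpha^2-\rho)$. To bring in the term $\xi(\alpha)$ appearing in the target formula, I would next use the defining condition $d\alpha=\rho\eta$ on the concircular function $\alpha$: evaluating it on $\xi$ and recalling $\eta(\xi)=-1$ gives $\xi(\alpha)=d\alpha(\xi)=\rho\,\eta(\xi)=-\rho$, so that $\rho=-\xi(\alpha)$ and hence $\alpha^2-\rho=\alpha^2+\xi(\alpha)$. Substituting back,
\[
\lambda=\mu-(n-1)\bigl(\alpha^2+\xi(\alpha)\bigr).
\]

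Finally I would plug this into $scal=(1-n)\alpha-n\lambda+\mu$ and simplify. Grouping the $\mu$-terms gives a factor $(1-n)$, and using $n(n-1)=-(1-n)n$ lets one factor $(1-n)$ out of the $\alpha^2+\xi(\alpha)$ contribution as well, producing
\[
scal=(1-n)\bigl[\alpha-n(\alpha^2+\xi(\alpha))+\mu\bigr],
\]
as claimed. I expect this proof to be essentially a bookkeeping exercise, with the main (mild) obstacle being sign control: getting $\mathrm{tr}_g(\eta\otimes\eta)=-1$ correct in the Lorentzian setting and correctly substituting $\rho=-\xi(\alpha)$. Everything else reduces to the two already-proved identities \eqref{ea9} and \eqref{ad} together with the structural equation $d\alpha=\rho\eta$.
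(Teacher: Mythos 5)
Your proof is correct and follows exactly the route the paper intends (the paper states this proposition without a written proof, as an immediate consequence of its equations (\ref{ea9}) and (\ref{ad})): trace the quasi-Einstein identity with the Lorentzian sign $\mathrm{tr}_g(\eta\otimes\eta)=g(\xi,\xi)=-1$, then eliminate $\lambda$ via $\mu-\lambda=(n-1)(\alpha^2-\rho)$ together with $\rho=-\xi(\alpha)$ coming from $d\alpha=\rho\eta$ and $\eta(\xi)=-1$. Nothing is missing; the sign bookkeeping you flag is precisely the only delicate point, and you handle it correctly.
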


In particular, $M$ is of constant scalar curvature if and only if $d\mu=(1-2n\alpha)\xi(\alpha)\eta+n d(\xi(\alpha))$.

\bigskip

\begin{example}
On the $(LCS)_3$-manifold $(M, g,\xi,\eta,\varphi,\alpha)$ considered in Example \ref{e2}, the data $(g,\xi,\lambda,\mu)$ for $\lambda=\frac{2(z-5)}{z^2}$ and $\mu=\frac{2(z+1)}{z^2}$ defines an almost $\eta$-Ricci soliton.
Indeed, the Riemann and the Ricci curvature tensor fields are given by:
$$R(E_1,E_2)E_2=\frac{4}{z^2}E_1, \ \ R(E_1,E_3)E_3=-\frac{6}{z^2}E_1, \ \ R(E_2,E_1)E_1=\frac{4}{z^2}E_2,$$
$$R(E_2,E_3)E_3=-\frac{6}{z^2}E_2, \ \ R(E_3,E_1)E_1=\frac{6}{z^2}E_3, \ \ R(E_3,E_2)E_2=\frac{6}{z^2}E_3,$$
$$S(E_1,E_1)=S(E_2,E_2)=\frac{10}{z^2}, \ \ S(E_3,E_3)=-\frac{12}{z^2}.$$
Also, $\alpha=-\frac{2}{z}$, $\rho=-\frac{2}{z^2}$ and from (\ref{ea9}) we obtain $S(E_1,E_1)=-(\alpha+\lambda)$ and $S(E_3,E_3)=\lambda-\mu$, therefore $\lambda=\frac{2(z-5)}{z^2}$ and $\mu=\frac{2(z+1)}{z^2}$.
\end{example}

\bigskip

Like for the case of $\eta$-Ricci solitons on Lorentzian para-Sasakian manifolds \cite{bl}, the next theorems formulate results in the more general case of $(LCS)_n$-manifold when it is Ricci symmetric, has Codazzi or cyclic $\eta$-recurrent Ricci curvature tensor.

\begin{proposition}\label{t1}
Let $(g,\xi,\eta,\varphi,\alpha)$ be an $(LCS)_n$-structure on the manifold $M$ and let $(g,\xi,\lambda,\mu)$ be an almost $\eta$-Ricci soliton on $M$.
\begin{enumerate}
  \item If the manifold $(M,g)$ is Ricci symmetric (i.e. $\nabla S=0$), then $\alpha^2+\xi(\alpha)$ is locally constant.
  \item If the Ricci tensor is $\eta$-recurrent (i.e. $\nabla S=\eta\otimes S$), then the scalar function $\alpha$ verifies $\alpha^2+(1+2\alpha)\xi(\alpha)+\xi(\xi(\alpha))=0$.
  \item If the Ricci tensor is Codazzi (i.e. $(\nabla_X S)(Y,Z)=(\nabla_Y S)(X,Z)$, for any $X$, $Y$, $Z\in \mathfrak{X}(M)$), then $d(\alpha^2+\xi(\alpha))\otimes \eta=\eta\otimes d(\alpha^2+\xi(\alpha))$.
      \end{enumerate}
\end{proposition}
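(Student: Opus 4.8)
The plan is to reduce all three parts to a single computation: the covariant derivative of the Ricci tensor against $\xi$. Write $\beta:=\alpha^2-\rho=\alpha^2+\xi(\alpha)$, the last equality because $d\alpha=\rho\eta$ and $\eta(\xi)=-1$ give $\xi(\alpha)=\rho\,\eta(\xi)=-\rho$. The key inputs are relation (\ref{ad1}), i.e. $S(X,\xi)=(n-1)\beta\,\eta(X)$, together with the structural identities $\nabla_X\xi=\alpha\varphi X$ (from the definition of the $(LCS)_n$-structure, since $\varphi=I+\eta\otimes\xi$), the formula $\nabla\eta=\alpha(g+\eta\otimes\eta)$ of Proposition \ref{p1}, and $\eta\circ\varphi=0$.

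First I would compute $(\nabla_X S)(\xi,\xi)=X\big(S(\xi,\xi)\big)-2S(\nabla_X\xi,\xi)$. Using $S(\xi,\xi)=-(n-1)\beta$ and $S(\nabla_X\xi,\xi)=\alpha S(\varphi X,\xi)=\alpha(n-1)\beta\,\eta(\varphi X)=0$, I expect the clean identity
$$(\nabla_X S)(\xi,\xi)=-(n-1)X(\beta).$$
Part (1) is then immediate: $\nabla S=0$ forces $X(\beta)=0$ for every $X$, so $\beta=\alpha^2+\xi(\alpha)$ is locally constant. For part (2), $\eta$-recurrence gives $(\nabla_X S)(\xi,\xi)=\eta(X)S(\xi,\xi)=-(n-1)\beta\,\eta(X)$, which combined with the formula above yields $X(\beta)=\beta\,\eta(X)$; evaluating at $X=\xi$ gives $\xi(\beta)=-\beta$, and substituting $\beta=\alpha^2+\xi(\alpha)$ with $\xi(\beta)=2\alpha\xi(\alpha)+\xi(\xi(\alpha))$ rearranges to exactly $\alpha^2+(1+2\alpha)\xi(\alpha)+\xi(\xi(\alpha))=0$.

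Part (3) requires the full derivative $(\nabla_X S)(Y,\xi)$, not merely the $Y=\xi$ case. I would expand $(\nabla_X S)(Y,\xi)=X\big((n-1)\beta\,\eta(Y)\big)-(n-1)\beta\,\eta(\nabla_XY)-\alpha S(Y,\varphi X)$, split $X(\eta(Y))=(\nabla_X\eta)(Y)+\eta(\nabla_XY)$ so the $\eta(\nabla_XY)$ terms cancel, and evaluate the last term via the soliton form (\ref{ea9}) and $\eta\circ\varphi=0$ as $S(Y,\varphi X)=-(\alpha+\lambda)[g(X,Y)+\eta(X)\eta(Y)]$. The result is
$$(\nabla_X S)(Y,\xi)=(n-1)X(\beta)\,\eta(Y)+C\,[g(X,Y)+\eta(X)\eta(Y)],$$
with the scalar $C=(n-1)\alpha\beta+\alpha(\alpha+\lambda)$, so the second term is symmetric in $X,Y$. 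The Codazzi condition $(\nabla_X S)(Y,\xi)=(\nabla_Y S)(X,\xi)$ then cancels the $C$-term and leaves $X(\beta)\eta(Y)=\eta(X)Y(\beta)$ for all $X,Y$, which is precisely $d(\alpha^2+\xi(\alpha))\otimes\eta=\eta\otimes d(\alpha^2+\xi(\alpha))$.

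The main obstacle is the bookkeeping in the computation of $(\nabla_X S)(Y,\xi)$: one must differentiate $S(Y,\xi)=(n-1)\beta\,\eta(Y)$ by the product rule, arrange the cancellation of the $\eta(\nabla_XY)$ contributions through $\nabla\eta$, and recognize that the piece surviving from $-\alpha S(Y,\varphi X)$ is symmetric in $X$ and $Y$. Everything else is routine substitution. The conceptual point unifying all three cases is that the non-$X(\beta)$ contribution to $(\nabla_X S)(Y,\xi)$ is symmetric in its first two arguments, so restricting to $\xi$ (parts 1 and 2) or antisymmetrizing (part 3) isolates exactly the gradient of $\beta=\alpha^2+\xi(\alpha)$.
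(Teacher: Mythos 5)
Your proof is correct, but it takes a genuinely different route from the paper's. The paper substitutes the soliton expression (\ref{ea9}) of $S$ into the definition of $\nabla S$ to obtain one master formula, its equation (\ref{m}), for $(\nabla_XS)(Y,Z)$; each of the three parts is then a specialization ($Y=Z=\xi$ for (1)--(2), $Z=\xi$ for (3)) producing a condition on $d(\lambda-\mu)$, which is converted into a condition on $\alpha^2+\xi(\alpha)$ via (\ref{ad}). You instead differentiate the structural identity (\ref{ad1}), $S(\cdot,\xi)=(n-1)\beta\,\eta$ with $\beta:=\alpha^2+\xi(\alpha)$, so $d\beta$ appears directly and the functions $\lambda,\mu$ never enter parts (1) and (2); I checked your computations (in particular $(\nabla_XS)(\xi,\xi)=-(n-1)X(\beta)$ and the coefficient $C=(n-1)\alpha\beta+\alpha(\alpha+\lambda)$ of the symmetric term in part (3)) and they are right. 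This has a consequence the paper's argument obscures: your proofs of (1) and (2) use only the $(LCS)_n$-structure, so those two conclusions hold on any Ricci symmetric, respectively $\eta$-recurrent, $(LCS)_n$-manifold, with no soliton hypothesis at all. The soliton equation is genuinely needed only in your part (3), where (\ref{ea9}) gives $S(Y,\varphi X)=-(\alpha+\lambda)[g(X,Y)+\eta(X)\eta(Y)]$, symmetric in $X$ and $Y$, so that the Codazzi antisymmetrization kills that term and isolates $d\beta\otimes\eta-\eta\otimes d\beta$; without the soliton (or some substitute, such as $\varphi$-invariance of the Ricci operator $Q$) the term $S(Y,\varphi X)$ need not be symmetric and the cancellation would fail. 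What the paper's computation buys is uniformity: one formula serves all three cases and any further curvature condition one cares to impose. What yours buys is economy (no need for the full $\nabla S$) and the sharper observation that (1) and (2) are purely structural facts.
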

\begin{proof}
Replacing the expression of $S$ from (\ref{ea9}) in $(\nabla_XS)(Y,Z):=X(S(Y,Z))-S(\nabla_XY,Z)-S(Y,\nabla_XZ)$ we obtain:
     \begin{equation}\label{m}
     (\nabla_XS)(Y,Z)=-(d\alpha+d\lambda)(X)g(Y,Z)-(d\alpha+d\mu)(X)\eta(Y)\eta(Z)-\end{equation}
     $$-\alpha(\alpha+\mu)[g(X,Y)\eta(Z)+g(X,Z)\eta(Y)+2\eta(X)\eta(Y)\eta(Z)].$$
\begin{enumerate}
 \item If $\nabla S=0$, taking $Y:=\xi$ and $Z:=\xi$ in the expression of $\nabla S$ from (\ref{m}) we obtain $d(\lambda-\mu)=0$, therefore $\lambda-\mu$ is locally constant. Also, from (\ref{ad}) we deduce that $\alpha^2-\rho=\alpha^2+\xi(\alpha)$ is locally constant.
      \item If $\nabla S=\eta\otimes S$, taking $Y:=\xi$ and $Z:=\xi$ in (\ref{m}) we obtain $d(\lambda-\mu)=(\lambda-\mu)\eta$, therefore, $d(\alpha^2+\xi(\alpha))=(\alpha^2+\xi(\alpha))\eta$. Replacing $d\alpha=-\xi(\alpha)\eta$ and applying $\xi$ we get the required relation.
\item If  $(\nabla_X S)(Y,Z)=(\nabla_Y S)(X,Z)$, for any $X$, $Y$, $Z\in \mathfrak{X}(M)$, taking $Z:=\xi$ in (\ref{m}) we obtain $d(\lambda-\mu)(X)\eta(Y)=d(\lambda-\mu)(Y)\eta(X)$, for any $X$, $Y\in \mathfrak{X}(M)$ and from (\ref{ad}) we get the conclusion.
      \end{enumerate}
\end{proof}

\bigskip

In what follows we shall consider almost $\eta$-Ricci solitons in $(LCS)_n$-manifolds requiring for the curvature to satisfy $(\xi,\cdot)_{R}\cdot S=0$ and $(\xi,\cdot)_{S}\cdot R=0$, respectively, where by $\cdot$ we denote the derivation of the tensor algebra at each point of the tangent space:
\begin{itemize}
  \item $((\xi,X)_{R}\cdot S)(Y,Z):=((\xi\wedge_RX)\cdot S)(Y,Z):=S((\xi\wedge_RX)Y,Z)+S(Y,(\xi\wedge_RX)Z)$, for $(X\wedge_RY)Z:=R(X,Y)Z$;
  \item $((\xi,X)_{S}\cdot R)(Y,Z)W:=(\xi\wedge_SX)R(Y,Z)W+R((\xi\wedge_SX)Y,Z)W+ R(Y,(\xi\wedge_SX)Z)W+R(Y,Z)(\xi\wedge_SX)W$, for $(X\wedge_SY)Z:=S(Y,Z)X-S(X,Z)Y$.
\end{itemize}

Remark that properties of $\eta$-Ricci solitons when asking for some curvature conditions were discussed in other papers of the author, in different geometries (see \cite{bl}, \cite{blag}, \cite{blaga}, \cite{blagam}, \cite{blcr}).

\begin{theorem}\label{t3}
Let $(g,\xi,\eta,\varphi,\alpha)$ be an $(LCS)_n$-structure on the manifold $M$ and $(g,\xi,\lambda,\mu)$ an almost $\eta$-Ricci soliton on $M$. If $(\xi,\cdot)_{R}\cdot S=0$, then $\mu=-\alpha$ and $\lambda=-\alpha-(n-1)(\alpha^2+\xi(\alpha))$ or $\lambda=\mu$ and $\alpha^2+\xi(\alpha)=0$.
\end{theorem}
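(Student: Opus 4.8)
The plan is to unravel the curvature condition $(\xi,\cdot)_R\cdot S=0$ into a pointwise tensor identity and then feed it the two structural inputs available on an $(LCS)_n$-manifold: the explicit form of $R(\,\cdot\,,\cdot)\xi$ and of $\eta(R(\,\cdot\,,\cdot)\,\cdot)$ from Proposition \ref{p1}, together with the soliton expression (\ref{ea9}) for $S$ and the value (\ref{ad1}) of $S(\,\cdot\,,\xi)$. Throughout I abbreviate $\beta:=\alpha^2-\rho=\alpha^2+\xi(\alpha)$ (the equality holding because $d\alpha=\rho\eta$ and $\eta(\xi)=-1$ give $\rho=-\xi(\alpha)$). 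By the definition recalled just before the theorem, the hypothesis reads, for all $X,Y,Z$,
\begin{equation*}
S\bigl(R(\xi,X)Y,Z\bigr)+S\bigl(Y,R(\xi,X)Z\bigr)=0 .
\end{equation*}
The decisive move is to specialise $Z:=\xi$, which turns both curvature terms into objects controlled by Proposition \ref{p1}.

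With $Z=\xi$, I would first compute $R(\xi,X)\xi=\beta\,[X+\eta(X)\xi]$ from the formula $R(A,B)\xi=\beta[\eta(B)A-\eta(A)B]$, so that $S(Y,R(\xi,X)\xi)=\beta\,S(X,Y)+(n-1)\beta^2\eta(X)\eta(Y)$ after inserting (\ref{ad1}). For the other term I would use (\ref{ad1}) again to write $S(R(\xi,X)Y,\xi)=(n-1)\beta\,\eta(R(\xi,X)Y)$ and then evaluate $\eta(R(\xi,X)Y)=-\beta[g(X,Y)+\eta(X)\eta(Y)]$ via the relation $\eta(R(A,B)C)=\beta[\eta(A)g(B,C)-\eta(B)g(A,C)]$. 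Adding the two contributions, the $\eta\otimes\eta$ parts cancel and one is left with the clean identity
\begin{equation*}
\beta\,\bigl[S(X,Y)-(n-1)\beta\,g(X,Y)\bigr]=0,\qquad \forall\, X,Y\in\mathfrak{X}(M).
\end{equation*}

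The conclusion then follows by a dichotomy on whether $\beta$ vanishes. If $\beta\neq 0$, I substitute the soliton form (\ref{ea9}) of $S$ to obtain $[\alpha+\lambda+(n-1)\beta]\,g(X,Y)+(\alpha+\mu)\,\eta(X)\eta(Y)=0$ for all $X,Y$. Testing this on a spacelike $X=Y$ orthogonal to $\xi$ (where $\eta(X)=0$) forces $\alpha+\lambda+(n-1)\beta=0$, i.e. $\lambda=-\alpha-(n-1)(\alpha^2+\xi(\alpha))$, and then testing on $X=Y=\xi$ (where $g(\xi,\xi)=-1$, $\eta(\xi)^2=1$) forces $\alpha+\mu=0$, i.e. $\mu=-\alpha$; this is the first alternative. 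If instead $\beta=0$, then $\alpha^2+\xi(\alpha)=0$, and relation (\ref{ad}), namely $\mu-\lambda=(n-1)\beta$, immediately gives $\lambda=\mu$; this is the second alternative. The computation is essentially routine once $Z=\xi$ is chosen, so the only point requiring genuine care is the separation of the $g$ and $\eta\otimes\eta$ terms in the $\beta\neq0$ branch: I must justify isolating each coefficient, which rests on the linear independence of $g$ and $\eta\otimes\eta$ as bilinear forms and is made effective by evaluating on a unit spacelike vector and on the timelike $\xi$, exploiting the Lorentzian signature $g(\xi,\xi)=-1$.
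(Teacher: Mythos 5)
Your proof is correct and takes essentially the same route as the paper's: both unravel $(\xi,\cdot)_{R}\cdot S=0$, specialise at $\xi$, apply the $(LCS)_n$ curvature identities together with the soliton expression for $S$, and conclude from the resulting factored vanishing identity and relation (\ref{ad}). The only difference is one of ordering — you set $Z=\xi$ before substituting $S$ and phrase the intermediate identity as $\beta\,[S-(n-1)\beta g]=0$, while the paper substitutes (\ref{ea9}) first and obtains the equivalent factored form $(\alpha^2-\rho)(\alpha+\mu)[g+\eta\otimes\eta]=0$ — so the two arguments coincide in substance.
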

\begin{proof}
The condition that must be satisfied by $S$ is:
\begin{equation}
S(R(\xi,X)Y,Z)+S(Y,R(\xi,X)Z)=0,
\end{equation}
for any $X$, $Y$, $Z\in \mathfrak{X}(M)$.

Replacing the expression of $S$ from (\ref{ea9}) and using the symmetries of $R$ we get:
\begin{equation}
(\alpha^2-\rho)(\alpha+\mu)[\eta(Y)g(X,Z)+\eta(Z)g(X,Y)+2\eta(X)\eta(Y)\eta(Z)]=0,
\end{equation}
for any $X$, $Y$, $Z\in \mathfrak{X}(M)$.

For $Z:=\xi$ we have:
\begin{equation}\label{ej}
(\alpha^2-\rho)(\alpha+\mu)[g(X,Y)+\eta(X)\eta(Y)]=0,
\end{equation}
for any $X$, $Y\in \mathfrak{X}(M)$. It follows $\alpha+\mu=0$ or $\alpha^2-\rho=0$. In the first case, from (\ref{ad}) we get $\lambda=-\alpha-(n-1)(\alpha^2+\xi(\alpha))$ and in the second case, also from (\ref{ad}) we obtain $\lambda=\mu$ and $\alpha^2+\xi(\alpha)=0$.
\end{proof}

For $\mu=0$, from Theorem \ref{t3} we deduce:

\begin{corollary}
On an $(LCS)_n$-manifold $(M,g, \xi,\eta,\varphi,\alpha)$ satisfying $(\xi,\cdot)_{R}\cdot S=0$, the Ricci soliton is steady.
\end{corollary}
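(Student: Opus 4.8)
The plan is to specialize Theorem~\ref{t3} to the case $\mu=0$, since setting $\mu=0$ is exactly the condition that collapses an almost $\eta$-Ricci soliton back to an (almost) Ricci soliton, for which the term ``steady'' means $\lambda=0$. Theorem~\ref{t3} hands us a clean dichotomy: either $\mu=-\alpha$ together with $\lambda=-\alpha-(n-1)(\alpha^2+\xi(\alpha))$, or else $\lambda=\mu$ together with $\alpha^2+\xi(\alpha)=0$. So the whole argument reduces to checking which of these two alternatives can coexist with $\mu=0$.

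First I would test the first branch against the hypothesis $\mu=0$. Substituting $\mu=0$ into $\mu=-\alpha$ forces $\alpha=0$. But by the very definition of an $(LCS)_n$-structure recalled in Section~2, $\alpha$ is a \emph{nowhere zero} smooth function on $M$, so $\alpha=0$ is impossible. Hence this branch is incompatible with $\mu=0$ and must be discarded. I would then turn to the second branch, where $\lambda=\mu$; imposing $\mu=0$ gives $\lambda=0$ at once, which is precisely the definition of a steady soliton.

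The proof is therefore a short case elimination requiring no computation beyond the statement of Theorem~\ref{t3}. The only substantive point is the invocation of the nowhere-vanishing of $\alpha$ to rule out the first alternative; that is the single step that does any work, and it is exactly what leaves $\lambda=0$ as the forced conclusion.
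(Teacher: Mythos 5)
Your proposal is correct and is essentially the paper's own (implicit) argument: the paper simply states ``For $\mu=0$, from Theorem \ref{t3} we deduce'' the corollary, and the deduction it has in mind is exactly your case elimination --- the branch $\mu=-\alpha$ is impossible since $\alpha$ is nowhere zero, so the branch $\lambda=\mu=0$ must hold. You have merely written out the step the paper leaves to the reader, and correctly identified the nowhere-vanishing of $\alpha$ as the point that does the work.
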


From the relations (\ref{ea9}), (\ref{ad}) and (\ref{ej}), if $\alpha^2+\xi(\alpha)\neq 0$, assuming that $(\xi,\cdot)_{R}\cdot S=0$, we obtain:
\begin{equation}
S=-(\alpha+\lambda)g=(n-1)(\alpha^2+\xi(\alpha))g.
\end{equation}

Therefore:
\begin{proposition}\label{p2}
If $(g, \xi,\eta,\varphi,\alpha)$ is an $(LCS)_n$-structure on the manifold $M$, $(g,\xi,\lambda,\mu)$ is an almost $\eta$-Ricci soliton on $M$ and $(\xi,\cdot)_{R}\cdot S=0$, then the scalar curvature of $M$ equals to $n(n-1)(\alpha^2+\xi(\alpha))$.
\end{proposition}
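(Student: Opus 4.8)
The plan is to reduce the statement to a single contraction. The curvature condition $(\xi,\cdot)_{R}\cdot S=0$ has already been processed in the paragraph preceding the statement: combining (\ref{ej}) with the soliton identity (\ref{ea9}) and with (\ref{ad}) yields, in the regime $\alpha^2+\xi(\alpha)\neq 0$, the Einstein-type relation $S=(n-1)(\alpha^2+\xi(\alpha))\,g$. I would take this proportionality as the input and simply read off its scalar curvature.

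Concretely, I would contract both sides with the inverse metric, i.e. apply the metric trace $\operatorname{tr}_g$. For the left-hand side this is by definition the scalar curvature, $scal=\operatorname{tr}_g S$. For the right-hand side, since $S$ is proportional to $g$ with the pointwise scalar factor $(n-1)(\alpha^2+\xi(\alpha))$, and since $\operatorname{tr}_g g=g^{ij}g_{ij}=n$ for any nondegenerate metric on an $n$-dimensional manifold (the Lorentzian signature being immaterial here), I obtain
\[
scal=(n-1)(\alpha^2+\xi(\alpha))\,\operatorname{tr}_g g=n(n-1)(\alpha^2+\xi(\alpha)),
\]
which is exactly the asserted value. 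Equivalently, one may sum $S(e_i,e_i)$ over a local pseudo-orthonormal frame with the signs $\varepsilon_i:=g(e_i,e_i)=\pm 1$; these signs reappear in $\operatorname{tr}_g g=\sum_i \varepsilon_i^2=n$, so the count is still $n$.

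There is no genuine analytic obstacle here, the proof being a one-line trace, and the single point deserving attention is the range of validity of the relation being contracted. The proportionality $S=(n-1)(\alpha^2+\xi(\alpha))\,g$ issues from the branch $\alpha+\mu=0$ of the dichotomy (\ref{ej}), which is forced exactly when $\alpha^2+\xi(\alpha)=\alpha^2-\rho\neq 0$, and this is the very hypothesis under which the displayed relation preceding the statement was obtained; the argument therefore applies directly in that regime. In the complementary case $\alpha^2+\xi(\alpha)=0$ the asserted value is simply $0$ but the $\eta\otimes\eta$ term in (\ref{ea9}) no longer drops out, so the formula should be read as pertaining to the nondegenerate branch; any treatment of the degenerate branch would instead proceed through the general scalar-curvature formula already established, together with the constraints on $\lambda$ and $\mu$ coming from Theorem \ref{t3}.
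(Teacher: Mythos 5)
Your proof is correct and is essentially the paper's own argument: the paper obtains the proposition precisely by tracing the relation $S=(n-1)(\alpha^2+\xi(\alpha))\,g$, which it derives from (\ref{ea9}), (\ref{ad}) and (\ref{ej}) under the hypothesis $\alpha^2+\xi(\alpha)\neq 0$, exactly as you do. Your closing caveat about the degenerate branch $\alpha^2+\xi(\alpha)=0$ (where the $\eta\otimes\eta$ term survives and the stated formula need not hold unless $\mu=-\alpha$) is a point the paper passes over in silence, so flagging that the proposition tacitly concerns the nondegenerate branch is a welcome clarification rather than a deviation.
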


\begin{remark}
If $(g, \xi,\eta,\varphi,\alpha)$ is an $(LCS)_n$-structure on the manifold $M$ with $\alpha$ a nonzero constant function, $(g,\xi,\lambda,\mu)$ is an almost $\eta$-Ricci soliton on $M$ and $(\xi,\cdot)_{R}\cdot S=0$, then the $\eta$-Ricci soliton is steady if $\alpha=-\frac{1}{n-1}$, shrinking if $\alpha\in(-\infty, -\frac{1}{n-1})\cup (0,\infty)$ or expanding if $\alpha\in (-\frac{1}{n-1},0)$, and in this case, the scalar curvature of $M$ is positive.
\end{remark}

\begin{theorem}\label{t4}
Let $(g, \xi,\eta,\varphi,\alpha)$ be an $(LCS)_n$-structure on the manifold $M$ and $(g,\xi,\lambda,\mu)$ an almost $\eta$-Ricci soliton on $M$. If $(\xi,\cdot)_{S}\cdot R=0$, then $\mu=-\alpha+2(n-1)(\alpha^2+\xi(\alpha))$ and $\lambda=-\alpha+(n-1)(\alpha^2+\xi(\alpha))$ or $\alpha^2+\xi(\alpha)=0$.
\end{theorem}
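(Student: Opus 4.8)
The plan is to expand the hypothesis $(\xi,\cdot)_S\cdot R=0$ by the definition of the derivation and of the operator $\wedge_S$, then to specialize the free arguments so that the many curvature terms collapse through the identities of Proposition \ref{p1}. Writing $\gamma:=\alpha^2-\rho=\alpha^2+\xi(\alpha)$ and $\beta:=(n-1)\gamma$ keeps the bookkeeping manageable; note that $(\xi\wedge_S X)V=S(X,V)\xi-S(\xi,V)X$, where $S(X,V)$ is given by (\ref{ea9}) and $S(\xi,V)=\beta\eta(V)$ by (\ref{ad1}). First I would write the condition out in full for arbitrary $X,Y,Z,W$:
$$(\xi\wedge_S X)R(Y,Z)W+R((\xi\wedge_S X)Y,Z)W+R(Y,(\xi\wedge_S X)Z)W+R(Y,Z)(\xi\wedge_S X)W=0.$$

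The decisive simplification is to set $W:=\xi$. Then every curvature term of the form $R(\cdot,\cdot)\xi$, as well as $R(\xi,Z)\xi$ and $R(Y,\xi)\xi$, reduces by the relation $R(X,Y)\xi=\gamma[\eta(Y)X-\eta(X)Y]$ of Proposition \ref{p1} to a combination of the arguments and of $\eta$. Collecting the $\xi$-component together with the coefficients of $X$, $Y$, $Z$, I expect the $X$-terms to cancel and to be left with the single relation
$$2\gamma[\eta(Z)S(X,Y)-\eta(Y)S(X,Z)]\xi-\gamma S(X,Z)Y+\gamma S(X,Y)Z+\beta R(Y,Z)X=0.$$
Taking the inner product of this with $\xi$ and using $\eta(R(Y,Z)X)=\gamma[\eta(Y)g(X,Z)-\eta(Z)g(X,Y)]$ from Proposition \ref{p1} should factor out $\gamma$, giving either $\gamma=0$ (which is the alternative $\alpha^2+\xi(\alpha)=0$) or the scalar identity $-\eta(Z)S(X,Y)+\eta(Y)S(X,Z)+\beta[\eta(Y)g(X,Z)-\eta(Z)g(X,Y)]=0$. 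Finally, putting $Y:=\xi$ in the latter and using $S(X,\xi)=\beta\eta(X)$ yields $S(X,Z)=-\beta[g(X,Z)+2\eta(X)\eta(Z)]$; comparing with (\ref{ea9}) forces $\alpha+\lambda=\beta$ and $\alpha+\mu=2\beta$, that is, $\lambda=-\alpha+(n-1)(\alpha^2+\xi(\alpha))$ and $\mu=-\alpha+2(n-1)(\alpha^2+\xi(\alpha))$, which are exactly the asserted values and are consistent with (\ref{ad}).

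The main obstacle is purely the bookkeeping in the $W=\xi$ expansion: four terms each split into several pieces, and one must carefully track both the cancellation of the $X$-coefficient and the correct assembly of the $\xi$-, $Y$- and $Z$-coefficients. The one genuinely delicate point is the last term $R(Y,Z)(\xi\wedge_S X)\xi$, where $(\xi\wedge_S X)\xi=S(X,\xi)\xi-S(\xi,\xi)X=\beta\eta(X)\xi+\beta X$ relies on $S(\xi,\xi)=-\beta$; mishandling this sign would corrupt the coefficient of $R(Y,Z)X$ and hence the final values of $\lambda$ and $\mu$.
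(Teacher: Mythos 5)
Your proposal is correct and follows essentially the same route as the paper: expand $(\xi,\cdot)_S\cdot R=0$ via the definition of $\wedge_S$, specialize arguments to $\xi$, contract with $\xi$, and factor out $\alpha^2-\rho=\alpha^2+\xi(\alpha)$ to obtain the dichotomy. The only difference is the order of the bookkeeping --- the paper contracts with $\xi$ first and then sets $Z:=\xi$, $W:=\xi$ with $S$ replaced from (\ref{ea9}), arriving at $(\alpha^2-\rho)(\alpha+2\lambda-\mu)[g(X,Y)+\eta(X)\eta(Y)]=0$ and invoking (\ref{ad}), while you set $W:=\xi$ first and recover the full form $S=-(n-1)(\alpha^2+\xi(\alpha))(g+2\eta\otimes\eta)$ (which the paper records separately just before Proposition \ref{p3}) before comparing with (\ref{ea9}); your intermediate identities, including the sign in $(\xi\wedge_S X)\xi=\beta\eta(X)\xi+\beta X$, all check out.
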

\begin{proof}
The condition that must be satisfied by $S$ is:
$$S(X,R(Y,Z)W)\xi-S(\xi,R(Y,Z)W)X+S(X,Y)R(\xi,Z)W-S(\xi,Y)R(X,Z)W+$$
\begin{equation}\label{e777}+S(X,Z)R(Y,\xi)W-S(\xi,Z)R(Y,X)W
+S(X,W)R(Y,Z)\xi-S(\xi,W)R(Y,Z)X=0,
\end{equation}
for any $X$, $Y$, $Z$, $W\in \mathfrak{X}(M)$.

Taking the inner product with $\xi$, the relation (\ref{e777}) becomes:
$$-S(X,R(Y,Z)W)-S(\xi,R(Y,Z)W)\eta(X)+S(X,Y)\eta(R(\xi,Z)W)-$$$$-S(\xi,Y)\eta(R(X,Z)W)
+S(X,Z)\eta(R(Y,\xi)W)-S(\xi,Z)\eta(R(Y,X)W)+$$\begin{equation}+S(X,W)\eta(R(Y,Z)\xi)-S(\xi,W)\eta(R(Y,Z)X)=0,
\end{equation}
for any $X$, $Y$, $Z$, $W\in \mathfrak{X}(M)$.

Replacing the expression of $S$ from (\ref{ea9}) and computing it in $Z:=\xi$ and $W:=\xi$, we get:
\begin{equation}\label{rt}
(\alpha^2-\rho)(\alpha+2\lambda-\mu)[g(X,Y)+\eta(X)\eta(Y)]=0,
\end{equation}
for any $X$, $Y\in \mathfrak{X}(M)$ and we obtain $\mu=-\alpha+2(n-1)(\alpha^2-\rho)$ and $\lambda=-\alpha+(n-1)(\alpha^2-\rho)$ or $\alpha^2-\rho=0$.
\end{proof}

For $\mu=0$, from Theorem \ref{t4} we deduce:

\begin{corollary}
On an $(LCS)_n$-manifold $(M,g,\xi,\eta,\varphi,\alpha)$ satisfying $(\xi,\cdot)_{S}\cdot R=0$, the almost Ricci soliton is given by $\lambda=-(n-1)(\alpha^2+\xi(\alpha))$.
\end{corollary}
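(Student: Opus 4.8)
The plan is to read this corollary as the specialization $\mu=0$ of Theorem \ref{t4}, the hypothesis $\mu=0$ being precisely what reduces an almost $\eta$-Ricci soliton to an almost Ricci soliton (the term $2\mu\,\eta\otimes\eta$ disappearing from the defining equation (\ref{e8})). So I would simply impose $\mu=0$ on the two mutually exclusive conclusions furnished by Theorem \ref{t4} and check that each of them independently forces $\lambda=-(n-1)(\alpha^2+\xi(\alpha))$.

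First I would treat the branch in which both relations
$$\mu=-\alpha+2(n-1)(\alpha^2+\xi(\alpha)),\qquad \lambda=-\alpha+(n-1)(\alpha^2+\xi(\alpha))$$
hold. Setting $\mu=0$ in the first relation yields $(n-1)(\alpha^2+\xi(\alpha))=\tfrac{\alpha}{2}$; feeding this into the expression for $\lambda$ gives $\lambda=-\alpha+\tfrac{\alpha}{2}=-\tfrac{\alpha}{2}=-(n-1)(\alpha^2+\xi(\alpha))$, which is exactly the asserted value.

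Next I would treat the branch $\alpha^2+\xi(\alpha)=0$. Recalling that $d\alpha=\rho\,\eta$ and $\eta(\xi)=-1$ give $\rho=-\xi(\alpha)$, so $\alpha^2-\rho=\alpha^2+\xi(\alpha)=0$, relation (\ref{ad}) becomes $\mu-\lambda=(n-1)(\alpha^2+\xi(\alpha))=0$; combined with $\mu=0$ this forces $\lambda=0$, which again coincides with $-(n-1)(\alpha^2+\xi(\alpha))$. Since both alternatives of Theorem \ref{t4} collapse to the same value of $\lambda$, the stated formula holds unconditionally. There is really no obstacle here beyond routine substitution; the only point worth flagging—more a matter of bookkeeping than difficulty—is to verify that the two cases merge into a single formula rather than leaving a residual case distinction in the conclusion.
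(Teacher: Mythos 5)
Your proof is correct and follows exactly the paper's route: the paper states this corollary as an immediate specialization of Theorem \ref{t4} to $\mu=0$, and your case analysis (using relation (\ref{ad}) and $\rho=-\xi(\alpha)$ in the second branch) simply makes explicit the bookkeeping the paper leaves to the reader, confirming that both alternatives collapse to $\lambda=-(n-1)(\alpha^2+\xi(\alpha))$.
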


From the relations (\ref{ea9}), (\ref{ad}) and (\ref{rt}), if $\alpha^2+\xi(\alpha)\neq 0$, assuming that $(\xi,\cdot)_{S}\cdot R=0$, we obtain:
\begin{equation}
S=-(\alpha+\lambda)(g+2\eta\otimes \eta)=-(n-1)(\alpha^2+\xi(\alpha))(g+2\eta\otimes \eta).
\end{equation}

Therefore:
\begin{proposition}\label{p3}
If $(g,\xi,\eta,\varphi,\alpha)$ is an $(LCS)_n$-structure on the manifold $M$, $(g,\xi,\lambda,\mu)$ is an almost $\eta$-Ricci soliton on $M$ and $(\xi,\cdot)_{S}\cdot R=0$, then the scalar curvature of $M$ equals to $-(n-1)(n-2)(\alpha^2+\xi(\alpha))$.
\end{proposition}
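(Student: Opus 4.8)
The plan is to read off the scalar curvature directly from the explicit Ricci tensor displayed just above the statement, namely $S=-(n-1)(\alpha^2+\xi(\alpha))(g+2\eta\otimes \eta)$, which was produced from (\ref{ea9}), (\ref{ad}) and (\ref{rt}) under the hypothesis $(\xi,\cdot)_{S}\cdot R=0$ together with $\alpha^2+\xi(\alpha)\neq 0$. Since the scalar curvature is the metric trace of $S$, the whole argument reduces to tracing the two tensors $g$ and $\eta\otimes\eta$. Concretely, I would fix a local pseudo-orthonormal frame $\{e_i\}_{i=1}^n$ with $g(e_i,e_i)=\varepsilon_i\in\{\pm1\}$ and write $scal=\sum_{i=1}^n\varepsilon_i\,S(e_i,e_i)$.

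Next I would evaluate the two traces separately. The trace of $g$ with respect to itself is the dimension: $\sum_i\varepsilon_i\,g(e_i,e_i)=\sum_i\varepsilon_i^2=n$. For the second term, because $\eta=i_\xi g$ is the $g$-dual of $\xi$, the expansion $\xi=\sum_i\varepsilon_i\,g(\xi,e_i)\,e_i$ in the pseudo-orthonormal frame gives $\mathrm{tr}_g(\eta\otimes\eta)=\sum_i\varepsilon_i\,\eta(e_i)^2=\sum_i\varepsilon_i\,g(\xi,e_i)^2=g(\xi,\xi)$. Here the decisive point, and the only place where the Lorentzian character of the structure enters essentially, is that $\xi$ is a unit \emph{timelike} concircular vector field, so $g(\xi,\xi)=-1$ rather than $+1$; hence $\mathrm{tr}_g(\eta\otimes\eta)=-1$.

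Assembling these, I would obtain $scal=-(n-1)(\alpha^2+\xi(\alpha))\big(n+2(-1)\big)=-(n-1)(n-2)(\alpha^2+\xi(\alpha))$, which is exactly the asserted value. The computation is otherwise routine; the one step that genuinely requires care, and which I expect to be the only real obstacle, is correctly carrying the sign $g(\xi,\xi)=-1$ into $\mathrm{tr}_g(\eta\otimes\eta)$, since an erroneous $+1$ would spuriously replace the factor $(n-2)$ by $(n+2)$. I would also record that, as in the derivation of the displayed $S$, the statement is understood in the nondegenerate regime $\alpha^2+\xi(\alpha)\neq 0$ isolated by Theorem \ref{t4}; alternatively, the same value follows by tracing (\ref{ea9}) with $\alpha+\lambda=(n-1)(\alpha^2+\xi(\alpha))$ and $\alpha+\mu=2(n-1)(\alpha^2+\xi(\alpha))$, which provides a convenient independent check.
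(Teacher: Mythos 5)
Your proposal is correct and follows exactly the paper's route: the proposition is read off by tracing the displayed identity $S=-(n-1)(\alpha^2+\xi(\alpha))(g+2\eta\otimes\eta)$, which comes from Theorem \ref{t4} via (\ref{ea9}), (\ref{ad}) and (\ref{rt}) in the regime $\alpha^2+\xi(\alpha)\neq 0$. Your explicit handling of the Lorentzian sign, $\mathrm{tr}_g(\eta\otimes\eta)=g(\xi,\xi)=-1$, which yields the factor $n-2$ rather than $n+2$, is precisely the point the paper leaves implicit.
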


\begin{remark}
If $(g, \xi,\eta,\varphi,\alpha)$ is an $(LCS)_n$-structure on the manifold $M$ with $\alpha$ a nonzero constant function, $(g,\xi,\lambda,\mu)$ is an almost $\eta$-Ricci soliton on $M$ and $(\xi,\cdot)_{S}\cdot R=0$, then the $\eta$-Ricci soliton is steady if $\alpha=\frac{1}{n-1}$, shrinking if $\alpha\in (0,\frac{1}{n-1})$ or expanding if $\alpha\in(-\infty, 0)\cup (\frac{1}{n-1},\infty)$, and in this case, the scalar curvature of $M$ is non positive.
\end{remark}

\begin{remark}
From Proposition \ref{p2} and Proposition \ref{p3} we notice that for $n>2$, the curvature conditions $(\xi,\cdot)_{R}\cdot S=0$ and $(\xi,\cdot)_{S}\cdot R=0$ respectively, on an $(LCS)_n$-manifold admitting an almost $\eta$-Ricci soliton, determine the scalar curvature to be (in each point) of opposite signs, respectively.
\end{remark}

\section{Gradient almost $\eta$-Ricci solitons}

When the potential vector field of (\ref{e8}) is of gradient type, i.e. $\xi=grad(f)$, then $(g,\xi,\lambda,\mu)$ is said to be a \textit{gradient almost $\eta$-Ricci soliton} and the equation satisfied by it becomes:
\begin{equation}\label{e22}
Hess(f)+S+\lambda g+\mu\eta\otimes \eta=0,
\end{equation}
where $Hess(f)$ is the Hessian of $f$ defined by $Hess(f)(X,Y):=g(\nabla_X\xi,Y)$.

\begin{proposition}
Let $(M,g,\xi,\eta,\varphi,\alpha)$ be an $(LCS)_n$-manifold. If (\ref{e22}) defines a gradient almost $\eta$-Ricci soliton on $M$
with the potential vector field $\xi:=grad(f)$ and $\eta=df$ the $g$-dual of $\xi$, then:
\begin{equation}
(\nabla_XQ)Y-(\nabla_YQ)X=(\alpha^2(df\otimes I-I\otimes df)-(d\alpha \otimes I-I\otimes d\alpha)-\end{equation}$$-[(d\lambda-\alpha \mu df)\otimes I-I\otimes (d\lambda-\alpha \mu df)]-[d(\alpha+\mu)\otimes df-df\otimes d(\alpha+\mu)]\otimes \xi)(X,Y),
$$
for any $X$, $Y\in\mathfrak{X}(M)$, where $Q$ stands for the Ricci operator defined by $g(QX,Y):=S(X,Y)$.
\end{proposition}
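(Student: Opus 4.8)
The plan is to produce an explicit formula for the Ricci operator $Q$, differentiate it covariantly, and antisymmetrize. First I would extract $Q$ from the soliton data. Since $g(QX,Y)=S(X,Y)$ and an almost $\eta$-Ricci soliton on an $(LCS)_n$-manifold satisfies (\ref{ea9}), I read off
$$QX=-(\alpha+\lambda)X-(\alpha+\mu)\eta(X)\xi,$$
for every $X\in\mathfrak{X}(M)$. Equivalently, in the gradient case $Hess(f)(X,Y)=g(\nabla_X\xi,Y)=\alpha[g(X,Y)+\eta(X)\eta(Y)]$, so (\ref{e22}) reproduces (\ref{ea9}). This single expression is essentially the whole input of the computation.

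Next I would compute $(\nabla_XQ)Y=\nabla_X(QY)-Q(\nabla_XY)$ by applying $\nabla_X$ to the formula above. Three structural facts are needed: $\nabla_X\xi=\alpha(X+\eta(X)\xi)$, the identity $(\nabla_X\eta)(Y)=\alpha[g(X,Y)+\eta(X)\eta(Y)]$ from Proposition \ref{p1} (used to rewrite the non-tensorial combination $X(\eta(Y))-\eta(\nabla_XY)$), and $d\alpha=\rho\eta$, so that directional derivatives of the scalars are recorded as $X(\alpha)=d\alpha(X)$, $X(\lambda)=d\lambda(X)$, $X(\mu)=d\mu(X)$. Expanding, $(\nabla_XQ)Y$ splits into a multiple of $Y$ with coefficient $-X(\alpha+\lambda)$, a term $-X(\alpha+\mu)\eta(Y)\xi$, a multiple of $X$ equal to $-\alpha(\alpha+\mu)\eta(Y)X$, and symmetric leftovers of the shape $g(X,Y)\xi$ and $\eta(X)\eta(Y)\xi$.

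Then I would antisymmetrize in $X$ and $Y$. The decisive simplification is that the symmetric pieces — exactly the $g(X,Y)\xi$ and $\eta(X)\eta(Y)\xi$ contributions — cancel, leaving
$$(\nabla_XQ)Y-(\nabla_YQ)X=[Y(\alpha+\lambda)]X-[X(\alpha+\lambda)]Y+\alpha(\alpha+\mu)[\eta(X)Y-\eta(Y)X]+\bigl([Y(\alpha+\mu)]\eta(X)-[X(\alpha+\mu)]\eta(Y)\bigr)\xi.$$
Finally I would repackage this in the tensorial notation of the statement. Reading $(\omega\otimes I-I\otimes\omega)(X,Y)=\omega(X)Y-\omega(Y)X$ and recalling $df=\eta$, the coefficient of $Y$ regroups as $-X(\alpha+\lambda)+\alpha(\alpha+\mu)\eta(X)=\alpha^2\eta(X)-d\alpha(X)-(d\lambda-\alpha\mu\,df)(X)$, and symmetrically for the coefficient of $X$, while the $\xi$-terms assemble into $-[d(\alpha+\mu)\otimes df-df\otimes d(\alpha+\mu)](X,Y)\,\xi$; splitting $\alpha(\alpha+\mu)=\alpha^2+\alpha\mu$ is what separates the $\alpha^2\,df$ block from the $\alpha\mu\,df$ correction absorbed into $d\lambda-\alpha\mu\,df$.

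The computation is entirely mechanical, so the only points that require care are the conversion of $X(\eta(Y))$ through $(\nabla_X\eta)(Y)$ before antisymmetrizing, and the final matching of the four tensor blocks against the grouped coefficients — in particular keeping track of how the constant $\alpha(\alpha+\mu)$ is deliberately split across the $\alpha^2$ block and the $\alpha\mu$ term hidden inside the $d\lambda-\alpha\mu\,df$ block.
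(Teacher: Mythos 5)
Your proposal is correct and takes essentially the same approach as the paper: both extract an explicit formula for $Q$ from the operator form of the soliton equation together with $\nabla\xi=\alpha(I+\eta\otimes\xi)$, then covariantly differentiate and antisymmetrize in $X$ and $Y$. The only cosmetic difference is order of operations --- you substitute the $(LCS)_n$ expression for $\nabla\xi$ into $Q$ before differentiating (getting $QX=-(\alpha+\lambda)X-(\alpha+\mu)\eta(X)\xi$), whereas the paper differentiates $Q=-\nabla\xi-\lambda I-\mu\,df\otimes\xi$ first and substitutes $\nabla\xi=\alpha(I+df\otimes\xi)$ afterwards.
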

\begin{proof}
Notice that (\ref{e22}) can be written:
\begin{equation}\label{e23}
\nabla\xi+Q+\lambda I+\mu df\otimes \xi=0.
\end{equation}

Then:
\begin{equation}
(\nabla_XQ)Y=-(\nabla_X\nabla_Y\xi-\nabla_{\nabla_XY}\xi)-X(\lambda)Y-X(\mu)df(Y)\xi-\mu[g(Y,\nabla_X\xi)\xi+df(Y)\nabla_X\xi].
\end{equation}

Replacing now $\nabla\xi=\alpha(I+df\otimes \xi)$ in the previous relation, after a long but straightforward computation, we get the required relation.
\end{proof}

\begin{remark}
i) Remark that since $\xi$ is concircular, hence geodesic vector field, from (\ref{e23}) follows that $\xi$ is an eigenvector of $Q$ corresponding to the eigenvalue $(n-1)(\alpha^2+\xi(\alpha))$. In particular, if $\lambda=\mu$, then $\xi\in \ker Q$.

ii) The Ricci operator is $\varphi$-invariant (i.e. $Q\circ \varphi=\varphi \circ Q$).
\end{remark}

From the above considerations, we can state:
\begin{proposition}
Let $(M,g,\xi,\eta,\varphi,\alpha)$ be an $(LCS)_n$-manifold. If (\ref{e22}) defines a gradient almost $\eta$-Ricci soliton on $M$
with the potential vector field $\xi:=grad(f)$ and $\eta=df$ the $g$-dual of $\xi$ and $M$ is $\varphi$-Ricci symmetric (i.e. $\varphi^2\circ \nabla Q=0$), then the soliton is given by $\lambda=-\alpha-(n-1)(\alpha^2+\xi(\alpha))$ and $\mu=-\alpha$.
\end{proposition}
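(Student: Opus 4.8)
The plan is to exploit the $\varphi$-Ricci symmetry condition $\varphi^2\circ\nabla Q=0$ together with the explicit formula for $(\nabla_XQ)Y-(\nabla_YQ)X$ obtained in the preceding proposition, and to combine these with the identity (\ref{ad}), namely $\mu-\lambda=(n-1)(\alpha^2+\xi(\alpha))$, and with the closedness relation $d\alpha=-\xi(\alpha)\eta$. Recall that for an $(LCS)_n$-structure $\varphi^2=I+\eta\otimes\xi$, so applying $\varphi^2$ to a vector field $V$ yields $V+\eta(V)\xi$; thus the condition $\varphi^2(\nabla_X Q)Y=0$ says that $(\nabla_X Q)Y$ is everywhere a multiple of $\xi$, i.e. its $\varphi$-part vanishes.

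First I would translate the hypothesis $\varphi^2\circ\nabla Q=0$ into the pointwise statement that $(\nabla_XQ)Y+\eta\bigl((\nabla_XQ)Y\bigr)\xi=0$ for all $X,Y$. Since the Ricci operator here is $Q=-(\alpha+\lambda)I-(\alpha+\mu)\,df\otimes\xi$ by (\ref{ea9}), I would compute $\nabla Q$ directly from this expression, using $\nabla\xi=\alpha(I+df\otimes\xi)$ and $\nabla\eta=\alpha(g+\eta\otimes\eta)$ from Proposition~\ref{p1}. This produces an explicit tensor whose components split into a term proportional to the identity (carrying $d\lambda$ and $\alpha$-factors) and a term proportional to $df\otimes\xi$ (carrying $d(\alpha+\mu)$ and $\alpha$-factors). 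Imposing that the $\varphi^2$-image vanish forces the coefficient of the $\varphi$-invariant part—the part not along $\xi$—to be zero identically.

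Next I would extract scalar equations by feeding in well-chosen arguments. Taking $X,Y$ orthogonal to $\xi$ (so $\eta(X)=\eta(Y)=0$), the $\xi$-component drops out and the surviving identity should read $\bigl(d\lambda-\alpha\mu\,df\bigr)(X)\,Y$-type terms vanish, giving $d\lambda=\alpha\mu\,df$ on the contact distribution, and similarly $d(\alpha+\mu)=0$ there. Contracting or evaluating at $\xi$ via (\ref{ad1}) and the eigenvalue relation $Q\xi=(n-1)(\alpha^2+\xi(\alpha))\xi$ from the preceding remark then pins the remaining freedom. Combining $d(\alpha+\mu)=0$ with (\ref{ad}) should collapse $\mu$ to $-\alpha$, whence (\ref{ad}) returns $\lambda=-\alpha-(n-1)(\alpha^2+\xi(\alpha))$, exactly the asserted conclusion.

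The main obstacle I anticipate is the bookkeeping in computing $\nabla Q$ cleanly and then correctly separating the $\varphi$-invariant part from the $\xi$-direction part: because $\varphi^2$ annihilates the $\xi$-direction, several terms that look nonzero in $\nabla Q$ are in fact invisible to the hypothesis, so I must be careful not to over-constrain the functions by mistakenly retaining $\xi$-component contributions. The cleanest route is probably to apply $\varphi$ (not $\varphi^2$) strategically, using $\eta\circ\varphi=0$ and $\varphi\xi=0$ to kill the $\xi$-terms from the start, thereby isolating precisely the equations $d(\alpha+\mu)=0$ and the $\lambda$-constraint that deliver $\mu=-\alpha$ and the stated value of $\lambda$.
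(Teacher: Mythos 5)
Your skeleton is the right one, and it is essentially what the paper leaves implicit ("from the above considerations"): write $Q=-(\alpha+\lambda)I-(\alpha+\mu)\,\eta\otimes\xi$ from (\ref{ea9}), differentiate it covariantly, apply $\varphi^2=I+\eta\otimes\xi$, which annihilates exactly the $\xi$-direction, and finish with (\ref{ad}). But the step that is supposed to produce $\mu=-\alpha$ --- the heart of the statement --- is broken. Carrying out the computation you outline gives
\begin{equation*}
(\nabla_XQ)Y=-d(\alpha+\lambda)(X)\,Y-\alpha(\alpha+\mu)\,\eta(Y)\,\varphi^2X-\bigl[d(\alpha+\mu)(X)\,\eta(Y)+\alpha(\alpha+\mu)\bigl(g(X,Y)+\eta(X)\eta(Y)\bigr)\bigr]\xi,
\end{equation*}
hence, using $\varphi^2\xi=0$ and $\varphi^4=\varphi^2$,
\begin{equation*}
\varphi^2\bigl((\nabla_XQ)Y\bigr)=-d(\alpha+\lambda)(X)\,\varphi^2Y-\alpha(\alpha+\mu)\,\eta(Y)\,\varphi^2X.
\end{equation*}
Every occurrence of $d(\alpha+\mu)$ sits in the $\xi$-component, so the hypothesis $\varphi^2\circ\nabla Q=0$ yields \emph{no} constraint whatsoever on $d(\alpha+\mu)$; your claimed ``$d(\alpha+\mu)=0$ on the contact distribution'' is precisely the over-constraining trap you warned yourself about. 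Worse, even if that constraint were available, the final inference ``$d(\alpha+\mu)=0$ together with (\ref{ad}) collapses $\mu$ to $-\alpha$'' is a non sequitur: a vanishing differential makes $\alpha+\mu$ locally constant, not zero, and (\ref{ad}) cannot fix that constant. No amount of first-order (differential) information on $\lambda,\mu$ can yield the pointwise identity $\mu=-\alpha$.

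What actually delivers $\mu=-\alpha$ is the zeroth-order term $-\alpha(\alpha+\mu)\eta(Y)\varphi^2X$, generated by $-(\alpha+\mu)\eta(Y)\nabla_X\xi$ with $\nabla_X\xi=\alpha\varphi^2X$; this term is \emph{not} along $\xi$ and survives $\varphi^2$. Take $Y:=\xi$ (so $\varphi^2Y=0$, $\eta(Y)=-1$) and any nonzero $X$ with $\eta(X)=0$ (it exists since $n>2$): the second display reduces to $\alpha(\alpha+\mu)X=0$, and since $\alpha$ is nowhere zero on an $(LCS)_n$-manifold, $\mu=-\alpha$. Then (\ref{ad}) gives $\lambda=\mu-(n-1)(\alpha^2+\xi(\alpha))=-\alpha-(n-1)(\alpha^2+\xi(\alpha))$, which is the assertion; the choice $\eta(Y)=0$ only adds the unclaimed extra fact $d(\alpha+\lambda)=0$. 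One further warning: this argument needs $\nabla Q$ itself, which you rightly proposed to compute; it cannot be run from the antisymmetrized formula for $(\nabla_XQ)Y-(\nabla_YQ)X$ of the preceding proposition, since antisymmetrization only yields the strictly weaker mixed relation $\xi(\alpha+\lambda)+\alpha(\alpha+\mu)=0$.
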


\bigskip

A lower and an upper bound of the Ricci curvature tensor's norm for a gradient almost $\eta$-Ricci soliton will be further obtained inspired by the inequality in the gradient Ricci soliton case discussed by M. Crasmareanu in \cite{cr}, but using a slightly different argument.

\begin{theorem}
If (\ref{e22}) defines a gradient almost $\eta$-Ricci soliton on the $n$-dimensional pseudo-Riemannian manifold $(M,g)$ and
$\eta=df$ is the $g$-dual of the gradient vector field $\xi:=grad(f)$, then:
\begin{equation}\label{e21}
|\nabla \xi|^2+\mu^2|\xi|^4+\mu \nabla_{\xi}(|\xi|^2)-\frac{(\Delta(f)+\mu|\xi|^2)^2}{n}\leq |S|^2\leq |\nabla \xi|^2+\mu^2|\xi|^4+\mu \nabla_{\xi}(|\xi|^2)+\frac{(scal)^2}{n}.
\end{equation}
\end{theorem}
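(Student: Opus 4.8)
The plan is to reduce the entire statement to one exact scalar identity, from which both bounds drop out by discarding a single manifestly nonnegative term. First I would rewrite the soliton equation (\ref{e22}) as $S=-T-\lambda g$, where $T:=Hess(f)+\mu\,\eta\otimes\eta=\nabla\xi+\mu\,df\otimes df$ is a symmetric $(0,2)$-tensor (recall that the associated tensor $X,Y\mapsto g(\nabla_X\xi,Y)$ is exactly $Hess(f)$, and $\eta=df$). The goal is to express $|S|^2$ through $|T|^2$, the scalars $scal$ and $\mathrm{tr}(T)$, and the undetermined function $\lambda$, and then eliminate $\lambda$ using a trace.

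Second, I would record two elementary computations. Taking the metric trace of $T$ gives $\mathrm{tr}(T)=\mathrm{div}(\xi)+\mu|\xi|^2=\Delta(f)+\mu|\xi|^2$, since $\xi=grad(f)$; this is precisely the quantity appearing in the lower bound. Expanding the squared norm,
$$|T|^2=|\nabla\xi|^2+2\mu\,\langle\nabla\xi,df\otimes df\rangle+\mu^2|df\otimes df|^2,$$
where the only step that is not purely formal is $\langle\nabla\xi,df\otimes df\rangle=Hess(f)(\xi,\xi)=g(\nabla_\xi\xi,\xi)=\tfrac12\nabla_\xi(|\xi|^2)$ (using $\nabla g=0$), together with $|df\otimes df|^2=|\xi|^4$. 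This yields $|T|^2=|\nabla\xi|^2+\mu^2|\xi|^4+\mu\nabla_\xi(|\xi|^2)$, which is exactly the common first three terms on both sides of the asserted inequality.

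Third, taking the trace of $S=-T-\lambda g$ gives $scal=-\mathrm{tr}(T)-\lambda n$, i.e. $\lambda n=-scal-\mathrm{tr}(T)$, which I would substitute into $|S|^2=|T+\lambda g|^2=|T|^2+2\lambda\,\mathrm{tr}(T)+\lambda^2 n$. After cancellation this collapses to the exact identity
$$|S|^2=|T|^2-\frac{(\mathrm{tr}\,T)^2}{n}+\frac{(scal)^2}{n}=|T|^2-\frac{(\Delta(f)+\mu|\xi|^2)^2}{n}+\frac{(scal)^2}{n}.$$
The two bounds follow immediately: keeping the nonnegative term $(scal)^2/n$ and dropping the nonpositive one gives the lower bound, while dropping $(scal)^2/n$ gives the upper bound.

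Finally, I would stress the one conceptual point, which is where the ``slightly different argument'' alluded to before the statement enters. In the Riemannian gradient Ricci soliton setting (Crasmareanu), such inequalities are obtained from $|A|^2\ge(\mathrm{tr}\,A)^2/n$, i.e. from the nonnegativity of the norm of the trace-free part of a symmetric tensor via Cauchy--Schwarz. That route is unavailable here because the metric is only pseudo-Riemannian, so $|S|^2$, $|T|^2$ and $|\nabla\xi|^2$ may all be negative. The exact-identity device sidesteps this entirely: the two correction terms $(scal)^2/n$ and $(\Delta(f)+\mu|\xi|^2)^2/n$ are squares of \emph{scalars} divided by $n>0$, hence nonnegative regardless of signature, and no positivity of the tensor inner product is ever invoked. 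Accordingly, I expect no genuine obstacle beyond careful sign/convention bookkeeping (the convention $\Delta f=\mathrm{tr}\,Hess(f)$ and the raising of indices in $\langle\,\cdot\,,\,\cdot\,\rangle$), the rest being routine algebra.
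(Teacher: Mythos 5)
Your proof is correct, and it reaches the stated double inequality by a genuinely different (and cleaner) final mechanism than the paper's, although both start from the same algebra of expanding squared norms of the soliton equation. The paper derives the two relations
\begin{equation*}
|Hess(f)|^2=|S|^2+n\lambda^2+2\lambda\, scal-\mu^2|\xi|^4-\mu\xi(|\xi|^2),\qquad
|S|^2=|Hess(f)|^2+n\lambda^2+2\lambda(\Delta(f)+\mu|\xi|^2)+\mu^2|\xi|^4+\mu\xi(|\xi|^2),
\end{equation*}
reads each as a quadratic equation in $\lambda$, and argues that since the real-valued function $\lambda$ satisfies it pointwise, each discriminant must be nonnegative; these two discriminant conditions are exactly the two sides of \eqref{e21}. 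You instead eliminate $\lambda$ altogether via the traced soliton equation $scal=-\mathrm{tr}(T)-n\lambda$, where $T:=Hess(f)+\mu\,\eta\otimes\eta$, and land on the exact identity $|S|^2=|T|^2-\frac{1}{n}(\mathrm{tr}\,T)^2+\frac{1}{n}(scal)^2$, from which both bounds follow by discarding one signed term each. The two arguments are algebraically equivalent (completing the square in the paper's quadratics and inserting the trace identity reproduces your formula), but your route buys two things: both inequalities fall out of a single identity rather than two separate discriminant conditions, and the equality cases are read off instantly (equality below iff $scal=0$, equality above iff $\Delta(f)+\mu|\xi|^2=0$), which the paper only records afterwards in its remark on simultaneous equality. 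Your closing observation is also on target: in indefinite signature the Cauchy--Schwarz bound $|A|^2\geq(\mathrm{tr}\,A)^2/n$ for symmetric tensors is unavailable, and both the paper's discriminant device and your exact identity avoid it, since the only nonnegativity ever used is that of squares of scalar functions.
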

\begin{proof}
From (\ref{e22}) we obtain:
\begin{equation}\label{e3}
|Hess(f)|^2=|S|^2+\lambda^2n+2\lambda scal-\mu^2|\xi|^4-\mu\xi(|\xi|^2)
\end{equation}
and
\begin{equation}\label{e4}
|S|^2=|Hess(f)|^2+\lambda^2n+2\lambda (\Delta(f)+ \mu|\xi|^2)+\mu^2|\xi|^4+\mu\xi(|\xi|^2).
\end{equation}

Remark that the conditions to exist a solution (in $\lambda$) are:
\begin{equation}
(scal)^2-n[|S|^2-|Hess(f)|^2-\mu^2|\xi|^4-\mu\xi(|\xi|^2)]\geq 0
\end{equation}
and
\begin{equation}
(\Delta(f)+ \mu|\xi|^2)^2-n[|Hess(f)|^2-|S|^2+\mu^2|\xi|^4+\mu\xi(|\xi|^2)]\geq 0
\end{equation}
which just imply the double inequality from the conclusion.
\end{proof}

\begin{remark}
i) A similar estimation holds for gradient almost $\eta$-Einstein solitons: the lefthand side of (\ref{e21}) is exactly the same, but in the righthand side term of (\ref{e21}), $\mu \cdot scal \cdot |\xi|^2$ will be supplementary added.

ii) The corresponding inequalities are just the same for the particular cases of gradient $\eta$-Ricci and gradient $\eta$-Einstein solitons, and in the case of gradient Ricci soliton, we get the result formulated by M. Crasmareanu in \cite{cr}.

iii) If $\xi$ is of constant length, $|\xi|^2=:k$, then (\ref{e21}) simplifies to:
$$|\nabla \xi|^2+\mu^2k^2-\frac{(\Delta(f)+\mu k)^2}{n}\leq |S|^2\leq |\nabla \xi|^2+\mu^2k^2+\frac{(scal)^2}{n}.$$

iv) The simultaneous equalities hold for $(scal)^2=-(\Delta(f)+\mu |\xi|^2)^2 \ (=0)$ i.e. for steady gradient almost $\eta$-Ricci soliton ($\lambda=0$) with $scal=0$ and $\Delta(f)=-\mu |\xi|^2$. In this case, if $|\xi|^2=:k$ is constant, then $|S|^2=|\nabla \xi|^2+\mu^2 k^2$. Also, for an $(LCS)_n$-manifold
$(M,g,\xi,\eta,\varphi,\alpha)$, the gradient almost $\eta$-Ricci soliton is given by $\lambda=0$ and $\mu=(n-1)\alpha$ and the scalar function $\alpha$ must verify $\alpha^2-\alpha+\xi(\alpha)=0$.
\end{remark}

\bigskip

A Bochner-type formula will be obtained for the gradient almost $\eta$-Ricci soliton case.

\begin{theorem}\label{t}
If (\ref{e22}) defines a gradient almost $\eta$-Ricci soliton on the $n$-dimensional pseudo-Riemannian manifold $(M,g)$ and
$\eta=df$ is the $g$-dual of the gradient vector field $\xi:=grad(f)$, then:
\begin{equation}\label{e53}
\frac{1}{2}(\Delta-\nabla_{\xi})(|\xi|^2)=|\nabla \xi|^2+\lambda |\xi|^2+\mu |\xi|^2(|\xi|^2-2\Delta(f))+(n-2)\xi(\lambda)-|\xi|^2\xi(\mu).
\end{equation}
\end{theorem}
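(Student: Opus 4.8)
The plan is to specialize the classical Bochner (Weitzenböck) identity to the potential function $f$ and then feed in the \emph{contracted forms} of the soliton equation (\ref{e22}). Writing $\xi=\nabla f$, the Bochner formula reads
\[
\frac{1}{2}\Delta(|\xi|^2)=|\nabla\xi|^2+\xi(\Delta(f))+S(\xi,\xi),
\]
where I have used $|Hess(f)|^2=|\nabla\xi|^2$, $g(\nabla f,\nabla(\Delta(f)))=\xi(\Delta(f))$ and $S(\nabla f,\nabla f)=S(\xi,\xi)$. The whole argument then amounts to replacing $S(\xi,\xi)$ and $\xi(\Delta(f))$ by expressions in $|\xi|^2,\lambda,\mu$ and their $\xi$-derivatives, and afterwards subtracting $\frac{1}{2}\nabla_{\xi}(|\xi|^2)=\frac{1}{2}\xi(|\xi|^2)$ to assemble the left-hand side of (\ref{e53}).

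First I would extract the two algebraic consequences of (\ref{e22}). Evaluating (\ref{e22}) on the pair $(\xi,\xi)$ and using $Hess(f)(\xi,\xi)=g(\nabla_{\xi}\xi,\xi)=\frac{1}{2}\xi(|\xi|^2)$ together with $\eta(\xi)=df(\xi)=|\xi|^2$ gives
\[
S(\xi,\xi)=-\frac{1}{2}\xi(|\xi|^2)-\lambda|\xi|^2-\mu|\xi|^4.
\]
Taking the metric trace of (\ref{e22}) (with $\operatorname{tr}(\eta\otimes\eta)=|\xi|^2$) yields the companion relation $\Delta(f)=-scal-n\lambda-\mu|\xi|^2$, which I will differentiate along $\xi$.

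The key --- and most delicate --- step is to compute $\xi(\Delta(f))$ without leaving an uncontrolled $\xi(scal)$ term. For this I would take the divergence of the tensorial identity (\ref{e22}). The ingredients are the commutation formula $\operatorname{div}(Hess(f))=d(\Delta(f))+S(\nabla f,\cdot)$, the contracted second Bianchi identity $\operatorname{div}(S)=\frac{1}{2}d(scal)$, the elementary $\operatorname{div}(\lambda g)=d\lambda$, and a direct computation of $\operatorname{div}(\mu\,\eta\otimes\eta)$, which (since $\nabla\eta=Hess(f)$) produces exactly the three terms $\xi(\mu)\eta+\mu\Delta(f)\eta+\mu\,Hess(f)(\xi,\cdot)$. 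Evaluating the resulting $1$-form on $\xi$ and eliminating $\xi(scal)$ by means of the traced relation above, I expect to arrive at
\[
\xi(\Delta(f))=-2S(\xi,\xi)+(n-2)\xi(\lambda)-|\xi|^2\xi(\mu)-2\mu|\xi|^2\Delta(f);
\]
the factor $(n-2)$ is precisely the signature of combining the Bianchi identity with the $n\lambda$ coming from the trace, which is reassuring.

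Finally I would substitute the expression for $S(\xi,\xi)$ into both the Bochner identity and the formula for $\xi(\Delta(f))$, insert everything into the Bochner identity, and subtract $\frac{1}{2}\xi(|\xi|^2)$. The $\xi(|\xi|^2)$ contributions cancel, the two occurrences of $\lambda|\xi|^2$ and of $\mu|\xi|^4$ combine with coefficient $+1$, and the surviving terms regroup into $\mu|\xi|^2(|\xi|^2-2\Delta(f))+(n-2)\xi(\lambda)-|\xi|^2\xi(\mu)$, which is exactly (\ref{e53}). I expect the only genuine difficulty to be the sign- and trace-bookkeeping in the divergence step, since every term there must land with the precise coefficient needed for the cancellations to occur; everything else is routine algebra.
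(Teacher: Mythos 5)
Your proposal is correct and follows essentially the same route as the paper's proof: both take the trace and the divergence of the soliton equation, eliminate the scalar-curvature term via $\mathrm{div}(S)=\frac{1}{2}d(scal)$ and the differentiated trace relation, and use the evaluation $S(\xi,\xi)=-\frac{1}{2}\xi(|\xi|^2)-\lambda|\xi|^2-\mu|\xi|^4$. Your explicit Bochner formula is exactly the combination of the paper's two identities $\mathrm{div}(Hess(f))=d(\Delta(f))+i_{Q\xi}g$ and $(\mathrm{div}(Hess(f)))(\xi)=\frac{1}{2}\Delta(|\xi|^2)-|\nabla\xi|^2$, so the two arguments coincide up to bookkeeping.
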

\begin{proof}
First remark that:
$$
trace(\mu \eta\otimes \eta)=\mu|\xi|^2
$$
and
$$div(\mu \eta\otimes \eta)=\frac{\mu}{2}d(|\xi|^2)+\mu \Delta(f) df+d\mu(\xi)df.$$

Taking the trace of the equation (\ref{e22}), we obtain:
\begin{equation}\label{e13}
\Delta(f)+scal+n\lambda +\mu |\xi|^2=0
\end{equation}
and differentiating it:
\begin{equation}\label{e14}
d(\Delta(f))+d(scal)+nd\lambda+\mu d(|\xi|^2)+|\xi|^2d\mu=0.
\end{equation}

Now taking the divergence of the same equation, we get:
\begin{equation}\label{e15}
div(Hess(f))+div(S)+d\lambda+\frac{\mu}{2}d(|\xi|^2)+\mu \Delta(f) df+d\mu(\xi)df=0.
\end{equation}

Substracting the relations (\ref{e15}) and (\ref{e14}) computed in $\xi$ and using \cite{bla}:
$$S(\xi,\xi)=-\frac{1}{2}\xi(|\xi|^2)-\lambda |\xi|^2-\mu|\xi|^4,$$
$$div(Hess(f))=d(\Delta(f))+i_{Q\xi}g,$$
$$(div(Hess(f)))(\xi)=\frac{1}{2}\Delta(|\xi|^2)-|\nabla \xi|^2$$
and
$$
div(S)=\frac{1}{2}d(scal),$$
we obtain (\ref{e53}).
\end{proof}

\begin{remark}
Denoting by $\Delta_f:=\Delta -\nabla_{\xi}$ the diffusion operator, for $\mu=0$ in Theorem \ref{t}, we get the relation for the case of gradient almost Ricci soliton:
\begin{equation}
\frac{1}{2}\Delta_f(|\xi|^2)=|\nabla \xi|^2+\lambda |\xi|^2+(n-2)\xi(\lambda)
\end{equation}
and in particular, for $\lambda$ constant, we obtain the corresponding relation for gradient Ricci soliton \cite{pe}:
\begin{equation}
\frac{1}{2}\Delta_f(|\xi|^2)=|\nabla \xi|^2+\lambda |\xi|^2.
\end{equation}
\end{remark}

\begin{remark}
For the case $\mu=0$, under the assumptions $\lambda |\xi|^2 \geq (2-n)\xi(\lambda)$ we get $\Delta_f(|\xi|^2)\geq 0$ and from the maximum principle follows that $|\xi|^2$ is constant in a neighborhood of any local maximum. If $|\xi|$ achieve its maximum, then $\lambda =\frac{2-n}{|\xi|^2}\xi(\lambda)$ (for $\xi\neq 0$), which yields a steady gradient Ricci soliton if $\lambda$ is constant.
\end{remark}

\begin{remark}
From Theorem \ref{t}, using (\ref{ad}) and taking into account that on an $(LCS)_n$-manifold we have $|\nabla \xi|^2=\alpha^2(n-1)$, we deduce that:

i) if $(g,\xi,\lambda,\mu)$ is a gradient almost $\eta$-Ricci soliton, then:
$$2\alpha \mu+\xi(\mu)=-2\alpha^2+[2(n-2)\alpha-1]\xi(\alpha)+(n-2)\xi(\xi(\alpha));$$

ii) if $(g,\xi,\lambda,\mu)$ is a gradient $\eta$-Ricci soliton, then $\alpha$ is constant, $\lambda=-\alpha-(n-1)\alpha^2$ and $\mu=-\alpha$.

iii) if $(g,\xi,\lambda)$ is a gradient almost Ricci soliton, then the scalar function $\alpha$ must satisfy:
$$2\alpha^2-[2(n-2)\alpha-1]\xi(\alpha)-(n-2)\xi(\xi(\alpha))=0;$$

iv) there is no gradient Ricci soliton on $M$.
\end{remark}

\small{

\bigskip

\textit{Adara M. Blaga}

\textit{Department of Mathematics}

\textit{West University of Timi\c{s}oara}

\textit{Bld. V. P\^{a}rvan nr. 4, 300223, Timi\c{s}oara, Rom\^{a}nia}

\textit{adarablaga@yahoo.com}
}

\begin{thebibliography}{99}


\bibitem{as} Ashoka, S. R., Bagewadi, C. S., Ingalahalli, G.: \textit{A Geometry on Ricci solitons in $(LCS)_n$-manifolds}, Differ. Geom. Dyn. Syst. \textbf{16}, 50-62 (2014).

\bibitem{ba} Bagewadi, C. S., Ingalahalli, G., Ashoka, S. R.: \textit{A Study on Ricci Solitons in Kenmotsu Manifolds}, ISRN Geometry, vol. 2013, Article ID 412593, 6 pages, (2013).

\bibitem{bai} Baishya, K. K., Chowdhury, P. R.: \textit{$\eta$-Ricci solitons in $(LCS)_n$-manifold}, Bull. Transilv. Univ. Bra\c sov \textbf{9(58)}, no. 2, Series III: Mathematics, Informatics, Physics, 1-12 (2016).

\bibitem{bl}Blaga, A. M.: \emph{$\eta$-Ricci solitons on Lorentzian para-Sasakian manifolds}. Filomat \textbf{30}(2), 489-496 (2016).

\bibitem{blag}Blaga, A. M.: \emph{$\eta$-Ricci solitons on para-Kenmotsu manifolds}. Balkan J. Geom. Appl. \textbf{20}(1), 1-13 (2015).

\bibitem{b11}Blaga, A. M.: \emph{Last multipliers on $\eta$-Ricci solitons}. Matematichki Bilten \textbf{42}(2), 85-90 (2018).

\bibitem{blaga}Blaga, A. M.: \emph{On gradient $\eta$-Einstein solitons}. Kragujevak J. Math. \textbf{42}(2), 229-237 (2018).



\bibitem{bla}Blaga, A. M.: \emph{On warped product gradient $\eta$-Ricci solitons}. Filomat \textbf{31}(18), 5791-5801 (2017).






\bibitem{blcr}{Blaga, A. M., Crasmareanu, M. C.}: \emph{Torse-forming $\eta$-Ricci solitons in almost paracontact $\eta$-Einstein geometry}. Filomat \textbf{31}(2), 499-504 (2017).

\bibitem{b9}{Blaga, A. M., Perkta\c s, S. Y., Acet, B. E., Erdogan, F. E.}: \emph{$\eta$-Ricci solitons in $\varepsilon$-almost paracontact metric manifolds}. Glasnik Matematicki \textbf{53}(1), 377-410 (2018).

\bibitem{bo} Brozos-V\'{a}zquez, M., Garci\'{a}-Ri\'{o}, E., Gilkey, P., Valle-Regueiro, X.: \textit{Half conformally flat generalized quasi-Einstein manifolds of metric signature $(2,2)$}, Int. J. Math. \textbf{29}(1) (2018).

\bibitem{cal} C\u alin, C., Crasmareanu, M.: \textit{From the Eisenhart problem to Ricci solitons in $f$-Kenmotsu manifolds}, Bull. Malays. Math. Sci. Soc. \textbf{33}(3), 361-368 (2010).

\bibitem{cr} {Crasmareanu, M.}: \emph{A new approach to gradient Ricci solitons and generalizations}: Filomat \textbf{32}(9), 3337-3346 (2018).

\bibitem{ch} Cho, J. T., Kimura, M.: \textit{Ricci solitons and real hypersurfaces in a complex space form}, Tohoku Math. J. \textbf{61}(2), 205-212 (2009).


\bibitem{hamil} Hamilton, R. S.: \textit{Three-manifolds with positive Ricci curvature}, J. Differential Geom. \textbf{17}(2), 255-306 (1982).





\bibitem{hu} Hui, S. K., Chakraborty, D.: \textit{$\eta$-Ricci solitons on $\eta$-Einstein $(LCS)_n$-manifolds}, Acta Univ. Palack. Olomuc. Fac. Rerum Natur. Math. \textbf{55}(2), 101-109 (2016).

\bibitem{huic} Hui, S. K., Chakraborty, D.: \textit{Some types of Ricci solitons on $(LCS)_n$-manifolds}, J. Math. Sci. Adv. Appl. \textbf{37}, 1-17 (2016).

\bibitem{hui} Hui, S. K., Lemence, R. S., Chakraborty, D.: \textit{Ricci solitons on Ricci pseudosymmetric $(LCS)_n$-manifolds}, arXiv:1707.03618.2017.

\bibitem{ma} Matsumoto, K.: \textit{On Lorentzian paracontact manifolds}, Bull. Yamagata Univ. Natural Sci. \textbf{12}(2), 151-156 (1989).

\bibitem{na} Nagaraja, H. G., Premalatha, C. R.: \textit{Ricci solitons in Kenmotsu manifolds}, J. Math. Anal. \textbf{3}(2), 18-24 (2012).

\bibitem{pe} Petersen, P., Wylie, W.: \textit{Rigidity of gradient Ricci solitons}, Pacific J. Math. \textbf{241}(2), 329-345 (2009).

\bibitem{pi} Pigola, S., Rigoli, M., Rimoldi, M., Setti, A.: \textit{Ricci almost solitons}, Ann. Sc. Norm.
Super. Pisa Cl. Sci. (5) \textbf{10}, 757-799 (2011).

\bibitem{sh} Shaihk, A. A.: \textit{On Lorentzian almost paracontact manifolds with a structure of the concircular type}, Kyungpook Math. J. \textbf{43}, 305-314 (2003).

\bibitem{ali} Shaihk, A. A.: \textit{Some results on $(LCS)_n$-manifolds}, J. Korean Math. Soc. \textbf{46}(3), 449-461 (2009).

\bibitem{ms} Smale, S.: \textit{On gradient dynamical systems}, Ann. of Math. \textbf{2}(74), 199-206 (1961).

\end{thebibliography}
\end{document}